\documentclass[10pt]{article}
\usepackage{latexsym}
\usepackage{amsmath}
\usepackage{comment}
\usepackage{soul}
\usepackage{multicol}
\usepackage{dsfont}
\usepackage{amssymb}
\usepackage{amsfonts}
\usepackage{amsthm}
\usepackage[margin=0.75in]{geometry}

\newtheorem{theorem}{Theorem}

\newtheorem{corollary}[theorem]{Corollary}
\newtheorem{lemma}[theorem]{Lemma}
\theoremstyle{definition}

\theoremstyle{remark}
\newtheorem{remark}[theorem]{Remark}

\usepackage{enumitem}
\usepackage{booktabs}
\usepackage{float}
\usepackage{xcolor}
\usepackage{algorithm}
\usepackage{algpseudocode}
\usepackage{caption}
\usepackage{cite}
\usepackage{ifthen}

\usepackage{tikz}
\usetikzlibrary{arrows.meta, bending, positioning, shapes, calc}

\usepackage{subcaption}
\usepackage{pgfplots}
\usepackage{pgfmath}
\usepackage{pgfplotstable}

\newcommand{\spacev}{{s}}
\newcommand{\Ob}{\Omega}

\usepackage{parskip}
\pgfplotsset{compat=1.18}
\usepgfplotslibrary{patchplots}

\graphicspath{ {./images/} }

\definecolor{myred}{RGB}{214, 39, 40}
\definecolor{myblue}{RGB}{31, 119, 180}
\definecolor{mygreen}{RGB}{44, 160, 44}
\definecolor{myorange}{RGB}{255, 127, 14}

\newcommand{\pd}[2]{\dfrac{\partial #1}{\partial #2}}

\newboolean{showimages}
\setboolean{showimages}{false}

\title{Online Gate-Driven Flow Control in Resin Transfer Moulding Using a Neural-Network Surrogate}
\author{Nicholas Wright$^{1,*}$, Oliver Maclaren$^{1}$, Piaras Kelly$^{1}$,\\Suresh Advani$^{2}$, Ruanui Nicholson$^{1}$\\[1.5ex]
{\small $^{1}$Department of Engineering Science, University of Auckland, Auckland, New Zealand}\\
{\small $^{2}$Center for Composite Materials, University of Delaware, Newark, DE, USA}\\[0.5ex]
{\small $^{*}$Corresponding author: \texttt{nwri968@aucklanduni.ac.nz}}}
\date{August 2026}

\begin{document}

\maketitle

\begin{abstract}
In resin transfer moulding, complete saturation of the fibre preform is necessary before the resin front reaches the outlet vent(s), to prevent dry-spot formation. In practice, the flow front rarely advances uniformly due to race-tracking effects. We propose a combined estimation and control strategy to address this issue. We use pressure-sensor data collected during filling to estimate the unknown race-tracking strengths via an iterated extended Kalman filter, and to simultaneously optimise auxiliary gate pressures to prevent the resin from arriving at the vent before complete saturation occurs. To make this approach feasible in real time, we replace the expensive finite element-control volume model with neural network surrogate models. The Bayesian approximation error framework is used to account for the discrepancy between the surrogates and the original model. To make the training of the surrogates feasible with time-dependent control actions, we prove that the flow-front geometry depends only on the time-averaged gate pressures. We demonstrate the methodology on a representative forked geometry, showing that the combined estimation-and-control framework can substantially reduce the number of unfilled nodes at the end of filling compared with an uncontrolled baseline. However, the level of improvement depends strongly on the location and number of auxiliary gates.
\end{abstract}

\section{Introduction}

Resin transfer moulding (RTM) is a widely used manufacturing technique for producing near-net-shaped fibre-reinforced composite components. The process involves injecting a low-viscosity thermoset resin through one or more gates into a closed mould containing a dry fibre preform, allowing the resin to impregnate the reinforcement before curing. A schematic of the process is shown in Figure~\ref{fig:RTMschematic}.

\begin{figure}[t]
    \centering
    \includegraphics[width=\textwidth]{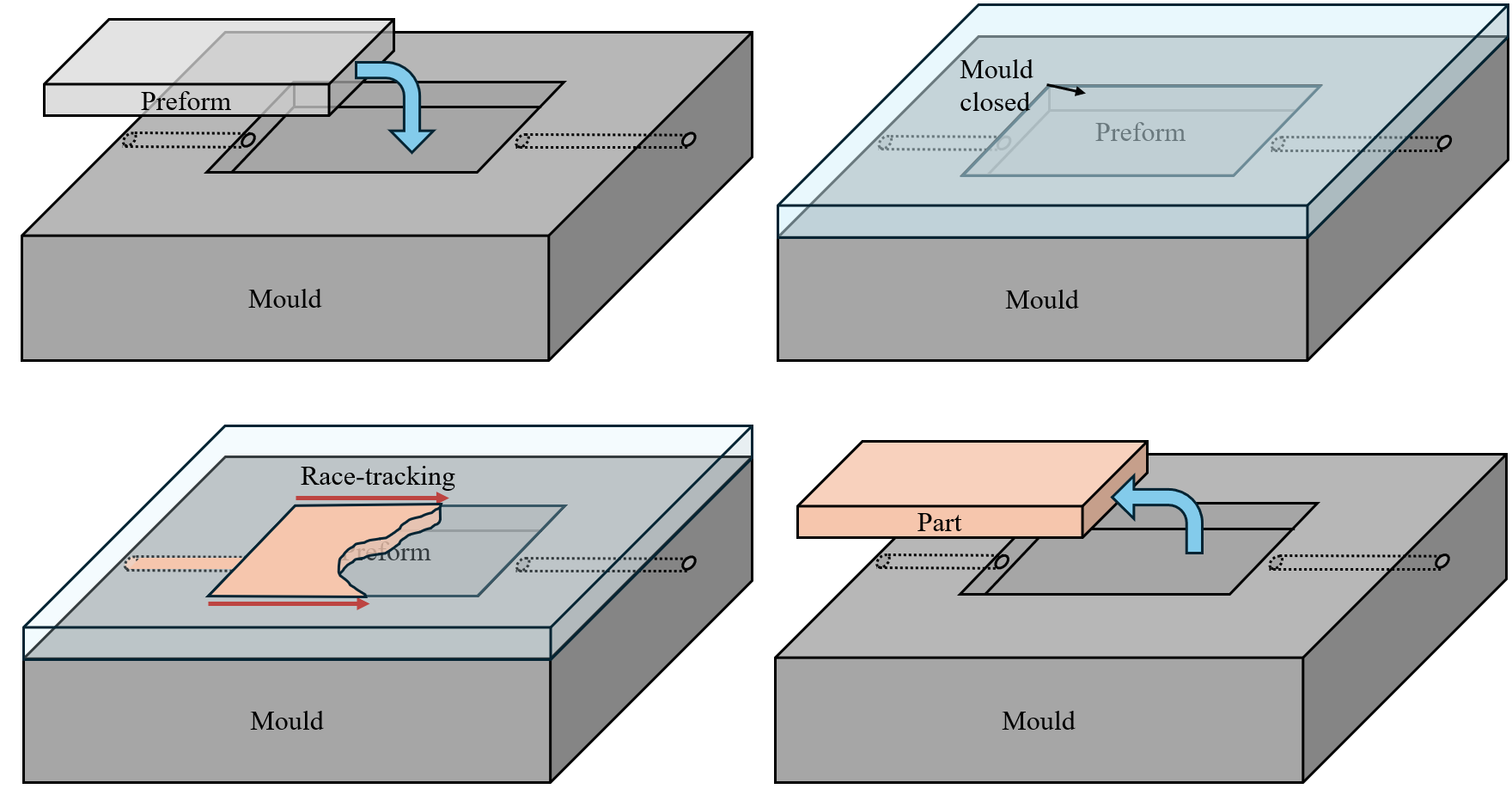}
    \caption{Schematic of the resin transfer moulding process. Resin is injected under pressure through one or more gates into a closed mould containing a dry fibre preform and flows toward the outlet vent(s). Race-tracking channels along preform edges can cause the flow front to advance non-uniformly, potentially reaching the vent before the preform is fully saturated. Note that in general the top of the mould is not transparent, but is here for illustrative purposes.}
    \label{fig:RTMschematic}
\end{figure}

A critical requirement during filling is that the resin must completely saturate the preform before any portion of the flow front reaches all outlet vents. If resin arrives at all vent(s) prematurely it seals any remaining air inside the mould, producing permanent dry spots that become voids upon curing and significantly degrade the structural integrity and mechanical performance of the final part~\cite{deAlmeida1994, Wei2019, Mehdikhani2019}. In practice the flow front rarely advances uniformly: the principal source of distortion is \textit{race tracking}, whereby regions of elevated permeability near preform edges, curved mould sections, or reinforcement boundaries form preferential flow channels~\cite{Bickerton1999, RTCharacterisation, bickerton2000, Kumar2014}. Because race-tracking strengths depend on lay-up, preform placement, fabric cutting, and mould geometry, they vary stochastically between nominally identical parts~\cite{RTCharacterisation, lawrence2004}, so a fixed injection strategy that fills one part may leave dry spots in another. Moreover, in general the RTM mould is opaque and such distortions cannot be easily detected. These features motivate \textit{online} estimation and control strategies that identify race-tracking conditions during filling and adjust the injection parameters in real time to direct the flow toward complete saturation.

Computational modelling plays a central role in understanding and mitigating these issues. The predominant approach for simulating resin flow in RTM is the finite element-control volume (FECV) method~\cite{Bruschke, Kang, Lee}, in which the pressure field is solved using finite elements on a fixed mesh while the advancing flow front is tracked through an overlapping set of control volumes. Several widely used solvers implement this approach, including the Liquid Injection Moulding Simulator (LIMS)~\cite{LIMS, LIMS2}. FECV-based simulations have been instrumental in the design of gate and vent locations~\cite{Ye2004, Ali2002}, the study of race-tracking effects~\cite{bickerton2000, Bickerton1999}, and the development of control strategies~\cite{Sozer2000, Part1, Part2}. However, these simulations are computationally expensive, as the pressure problem must be re-solved each time a control volume fills, and the event-driven time stepping introduces discontinuities in the model output with respect to parameters such as permeability~\cite{Wright2025}. These characteristics make direct use of FECV models challenging for real-time inference and control, motivating the use of surrogate models.

Active control of RTM filling has been studied for at least three decades. Early numerical work by Liu et al.~\cite{Liu} showed that opening and closing gates in response to the evolving flow front can prevent dry-spot formation. Subsequent studies developed on-line controllers that map sensor measurements to corrective gate actions and validated them experimentally, including strategic decision-tree control~\cite{Sozer2000}, real-time gate-flow-rate control under race tracking~\cite{Bickerton2001}, distributed sensing-and-actuation frameworks~\cite{Part1, Part2}, and disturbance rejection using combined sensors and actuators~\cite{Lawrence2003}. Such strategies are predetermined a priori with the aid of computational models, but the model cannot be used when the filling begins due to the expensive computational cost. This limits the flexibility and capability of such methods.

A parallel body of work places a computational model within the control loop. Nielsen and Pitchumani~\cite{Nielsen2002} used on-the-fly finite-difference simulations to set injection-port flow rates in real time; Kang et al.~\cite{Kang} pre-computed multi-gate injection schedules from finite element simulations; Restrepo et al.~\cite{Restrepo2007} developed adaptive flow-rate and pressure-control algorithms; and Neitzel and Puch~\cite{Neitzel2023} combined capacitive sensing with controlled pressure sequences to minimise void formation. These approaches improve fill quality but generally rely on full flow-front visualisation or pre-computed strategies that do not adapt to unknown, variable permeability fields.

Machine-learning methods have more recently been applied to RTM defect detection~\cite{Mendikute2021, Wang2024}, race-tracking identification from sensor data~\cite{Leon2022, Ago, Siddig2018}, process monitoring~\cite{Torres2019, Crawford2020}, and porosity inference within hybrid-twin frameworks~\cite{Rodriguez2023}. Closest to the present work, reinforcement learning (RL) has been explored for RTM flow control: Szarski and Chauhan~\cite{Szarski2023} optimised flow-distribution networks using deep RL, and Mocerino et al.~\cite{Mocerino2025} demonstrated the first sim-to-real transfer of an RL-based pressure controller, using RL, trained in simulation and deployed on a laboratory-scale setup. These policies are learned offline to be robust across a distribution of material variability; they do not estimate the uncertain permeability online and therefore provide no uncertainty quantification over the race-tracking parameters, which is valuable for quality assessment and process understanding. The present work instead couples estimation and control, providing calibrated permeability uncertainty throughout filling while optimising the gate pressures to prevent dry spots.

Other research similar to what we present here pursues online Bayesian estimation of the permeability parameters. Iglesias et al.~\cite{Iglesias1, Iglesias2, Iglesias3} applied Bayesian inversion to spatially variable permeability fields in RTM, most recently using neural network surrogates for real-time inversion~\cite{Iglesias3}. In our earlier work we estimated race-tracking strengths from in-mould pressure measurements and introduced an optimal sensor-placement strategy~\cite{Wright2023}; showed that a neural network surrogate overcomes the discontinuity and non-differentiability of the finite element-control volume method while Bayesian approximation error (BAE) accounts for the surrogate-solver discrepancy~\cite{Wright2025}; and extended this surrogate-and-BAE framework to generally varying permeability fields~\cite{Wright2026}.

In the present work, we bring these lines of research together: we combine online Bayesian inference of race-tracking parameters with active gate-pressure control to prevent dry-spot formation during mould filling. The core idea is to use pressure-sensor data collected during filling to estimate the unknown race-tracking strengths via an iterated extended Kalman filter (IEKF), and to simultaneously optimise auxiliary gate pressures so as to steer the flow front away from premature vent arrival. To make this approach feasible in real time, we replace the computationally expensive LIMS-based forward model with two neural network surrogates: one that predicts sensor pressures as a function of the permeability parameters and gate actions, and a second that predicts a continuous measure of the unfilled area used as the control objective. To make this possible, we exploit a unique property of quasi-static Darcy flow to resolve the curse of dimensionality introduced by time-varying control actions: we prove that the resin domain geometry depends exactly on the time-averaged gate pressures, allowing the injection history to be represented by a single scalar and making surrogate-based control tractable.

The remainder of the paper is organised as follows. Section~\ref{sec:ForwardModel} describes the mathematical model of RTM filling, including the governing equations, the race-tracking parameterisation, and the finite element-control volume solution method. Next, Section~\ref{sec:Control} formulates the Bayesian inverse problem and the online IEKF-based estimation, defines the gate-pressure control objective, and then develops the BAE-corrected surrogate, the average-pressure parameterisation that makes surrogate training tractable. It then finishes by describing the complete surrogate-integrated control algorithm in full. In Section~\ref{sec:Fork}, we present a numerical case study, including surrogate training details and numerical results. We conclude in Section~\ref{sec:Con}.

\section{Mathematical Modelling in RTM}
\label{sec:ForwardModel}

This section summarises the numerical model used to simulate the RTM filling process. The governing equations and numerical method are standard in liquid composite moulding; we therefore give only those details needed for the control formulation and refer the reader to~\cite{CompositesBook, Wright2023, Wright2025} for full accounts.

\subsection{Governing equations}

To begin, let $D \subset \mathbb{R}^2$ be a preform (saturated or unsaturated, within the mould cavity) with $n_g$ injection gates $\Gamma_{\text{in}}^{j}$ ($j=1,\dots,n_g$) and one or more outlet vents. In this work we consider a single vent, denoted $\Gamma_{\text{vent}}$. At any time $t \ge 0$ the preform can be partitioned into the resin-filled region $\Ob(t)$ and the remaining dry region $D \setminus \Ob(t)$, with the resin domain $\Ob(t) \subseteq D$ having free boundary $\Gamma_{\text{front}}(t) := \partial\Ob(t) \setminus \partial D$. For the slow, incompressible thermoset resins typically used in RTM, resin flow through the fibrous preform is well described by Darcy's law~\cite{CompositesBook, pillai1998}. The volumetric resin flux is given by
\begin{equation}
    q(\spacev, t) = -\frac{K(\spacev)}{\mu}\nabla p(\spacev, t), \quad \spacev \in \Ob(t),
    \label{eq:Darcy}
\end{equation}
where $p$ is the pressure, $\mu>0$ the resin viscosity, and $K \in \mathbb{R}^{2 \times 2}$ the symmetric, positive-definite permeability tensor. Conservation of mass under the assumption of quasi-static flow yields the elliptic pressure equation
\begin{equation}
    -\nabla \cdot \left(\frac{K(\spacev)}{\mu}\nabla p(\spacev, t)\right) = 0, \quad \spacev \in \Ob(t).
    \label{eq:Pressure}
\end{equation}
$\Omega(t)$ is subject to a zero-flow Neumann boundary condition on the mould walls, a $p=0$ Dirichlet boundary at the flow front and Dirichlet boundaries at injection gates: the time-varying gate pressures $a_j(t)$ on $\Gamma_{\text{in}}^{j}$. Specifically, we have
\begin{align}
    p(\cdot, t) &= a_j(t) && \hspace{-3cm}\text{on } \Gamma_{\text{in}}^{j}, \quad j = 1, \dots, n_g, \label{eq:BVP_gates} \\
    p(\cdot, t) &= 0 && \hspace{-3cm}\text{on } \Gamma_{\text{front}}(t) \cup \Gamma_{\text{vent}}, \label{eq:BVP_front} \\
    q(\cdot,t) \cdot n &= 0 && \hspace{-3cm}\text{on } \partial D \setminus ( \Gamma_{\text{vent}} \cup (\cup_{j=1}^{n_g} \Gamma_{\text{in}}^{j})), \label{eq:BVP_walls}
\end{align}
where $n$ is the normal vector to the impermeable mould wall.

The advancement of the resin domain is governed by the kinematic boundary condition at the flow front, known in free-boundary theory as the Stefan condition~\cite{Elliott1982}. The normal front velocity $v_n$ is determined by
\begin{equation}
    v_n(\spacev, t) = \frac{q(\spacev, t) \cdot n(\spacev, t)}{\phi} = -\frac{K(\spacev)}{\mu\phi} \frac{\partial p}{\partial n}(\spacev, t), \quad \spacev \in \Gamma_{\text{front}}(t),
    \label{eq:FrontVelocity}
\end{equation}
where $\phi$ is the porosity of the preform and $\pd{p}{n}$ is the directional derivative of the pressure field in the normal direction to the flow boundary. Note that the pressure $p$, flux $q$, and front normal $n$ all depend on time through the evolving resin domain $\Ob(t)$.

The dominant source of flow-front distortion in RTM is race tracking (RT): the formation of preferential flow channels along preform edges, mould boundaries, or other regions of elevated permeability~\cite{Bickerton1999, RTCharacterisation, bickerton2000}. We model RT by partitioning the mould geometry into $n$ potential race-tracking regions, each with an associated \emph{race-tracking strength} that scales the local permeability relative to the nominal bulk value. We collect these strengths into a parameter vector $x \in \mathbb{R}^n$ and work with the log-permeabilities, so that $x_i = \log(K_i / K_{\rm bulk})$ for the $i$-th region. Here, $K_i$ is the permeability in race-tracking region $i$, while $K_{\rm bulk}$ is the permeability in the no-RT region of the preform.

\subsection{Finite element-control volume method}
\label{sec:FECV}

For a given resin domain $\Ob(t)$, the pressure field is computed using a finite element discretisation. The evolution of the resin domain $\Ob(t)$ is tracked using the finite element-control volume (FECV) method, widely adopted in liquid composite moulding simulations~\cite{Bruschke, Kang, Lee, CompositesBook}. In this approach, each node of the finite element mesh is associated with a control volume whose saturation $\psi_i \in [0,1]$ is tracked. The pressure solution provides resin fluxes between neighbouring control volumes via Darcy's law; these fluxes determine the rate at which each control volume fills. Time advances in an event-driven fashion: the next time step corresponds to the instant at which a control volume becomes fully saturated ($\psi_i = 1$), at which point $\Ob(t)$ is updated and the pressure problem is resolved on the expanded domain. In this work, simulations are carried out using the Liquid Injection Moulding Simulator (LIMS)~\cite{LIMS, LIMS2}, a widely used implementation of the FECV method. LIMS uses linear Lagrange finite elements for the pressure computation.

\subsection{Forward operator and observation model}
\label{sec:ObsModel}

Although the FECV model can solve for the pressure at all locations, in practice it is only possible to measure the pressure at discrete sensor locations within the mould. In the following section, it will be useful to define the injection-pressure history at all gates by time $t$ as
\begin{equation}
    A(t) \in \mathbb{R}^{n_g\times k} = a(\tau) \text{ for all } \tau \le t, \quad a(\tau) = [a_1(\tau), a_2(\tau), \dots]^T,
\end{equation}
where $k$ is the number of timesteps that have occurred at time $t$, i.e. instances where the injection pressures can be changed during online control. We then consider a \emph{forward operator} $f$ that maps the race-tracking strength $x$, time $t$ and the injection pressure history $A(t)$ to the simulated pressures at those sensor locations:
\begin{equation}
    f: \mathbb{R}^n\times\mathbb{R}_{\geq 0}\times\mathbb{R}^{n_g\times k} \to \mathbb{R}^m,
    \label{eq:ForwardOp}
\end{equation}
where $m$ is the number of pressure sensors. This forward operator represents a FECV simulation in LIMS using the race-tracking strengths $x$ and the time-dependent injection pressures given by $A(t)$ until time $t$. At this point, the solved finite element pressures are used to return the pressures at the specific sensor locations.

In practice, the measurements observed by the sensors contain some level of noise. We will assume that at a measurement time $t$, the observed sensor pressures $y_t \in \mathbb{R}^m$ are related to the race-tracking parameters through the additive error model
\begin{equation}
    y_t = f(x,t,A(t)) + e,
    \label{eq:ObsModel}
\end{equation}
where $e \in \mathbb{R}^m$ is measurement noise, that we assume to be independent of time, the injection pressures and the race-tracking strengths.

\section{Online Estimation and Control}
\label{sec:Control}

We now develop the online estimation and control framework. We first summarise the Bayesian approach to estimating the race-tracking parameters $x$ from in-mould pressure measurements and the IEKF framework used to compute this estimation iteratively, online~\cite{Kaipio, calvetti2007introduction, stuart2010inverse}, extending our earlier formulation~\cite{Wright2023, Wright2025} to incorporate gate-pressure control, and the optimal selection of those gate pressure(s). We then describe the surrogate methodology needed to make this approach tractable in real time, including the reduction of the injection pressure history into only the average and current pressures. Finally, we combine these methods to introduce the complete algorithm.

\subsection{Bayesian approach for parameter estimation}
\label{sec:MAP}

The Bayesian approach to parameter estimation treats the unknown parameters (in this case, the race-tracking strengths) $x$ as random variables and seeks to infer their probability distribution given observed data $y_t$. Note that this inference is designed for a single time $t$ using one set of sensor pressures $y_t$. This inference is based on Bayes' formula:
\begin{equation}
    \pi(x|y_t) \propto \pi(y_t|x)\pi(x).
\end{equation}
Here, $\pi(y_t|x)$ is the likelihood distribution, and $\pi(x)$ is the prior distribution, which encodes our prior beliefs, i.e. before any data is received, in the race-tracking strengths. For the prior distribution, we restrict ourselves to a Gaussian distribution. Specifically, we set
\begin{equation}
    \pi(x) = \mathcal{N}(x_0, \Gamma_x),
    \label{eq:Prior}
\end{equation}
where $x_0$ and $\Gamma_x$ are the mean and covariance of the distribution, respectively. The form of the likelihood is based on the distribution of additive noise. We assume $e \sim \mathcal{N}(0, \Gamma_e)$, independent from the parameters (race-tracking strengths) and any control actions. In this case~\cite{Kaipio, kaipio2007statistical}, the likelihood takes the form
\begin{equation}
    \pi(y_t|x) \propto \exp\!\left\{-\tfrac{1}{2}\|L_e(y_t - f(x,t,A(t)))\|^2\right\},
    \label{eq:Likelihood}
\end{equation}
where $L_e^T L_e = \Gamma_e^{-1}$. Bayes' theorem then yields the \emph{posterior distribution}
\begin{equation}
    \pi(x|y_t) \propto \exp\!\left\{-\tfrac{1}{2}\|L_e(y_t - f(x,t,A(t)))\|^2 - \tfrac{1}{2}\|L_x(x - x_0)\|^2\right\},
    \label{eq:Posterior}
\end{equation}
where $L_x^T L_x = \Gamma_x^{-1}$.

For the nonlinear, real-time problem considered here, accurately characterising the posterior - for example via Markov chain Monte Carlo - is computationally infeasible. Instead, as is common, we take a Gaussian approximation centred at the maximum a posteriori (MAP) estimate, the value of $x$ that maximises the posterior, so that $\pi(x|y_t) \approx \mathcal{N}(x_{\rm MAP},\Gamma_{\rm post})$. The MAP estimate is equivalently the minimiser of the negative log posterior:
\begin{equation}
    x_{\rm MAP} = \arg\min_{x \in \mathbb{R}^n}\;\tfrac{1}{2}\|L_e(y_t - f(x,t,A(t)))\|^2 + \tfrac{1}{2}\|L_x(x - x_0)\|^2.
    \label{eq:MAP}
\end{equation}
On the other hand, linearising $f$ about $x_{\rm MAP}$ gives the approximate posterior covariance
\begin{equation}
    \Gamma_{\rm post} = \bigl(F^T \Gamma_e^{-1} F + \Gamma_x^{-1}\bigr)^{-1},
    \label{eq:PostCov}
\end{equation}
where $F := D_x f(x_{\rm MAP},t,A(t)) \in \mathbb{R}^{m \times n}$ denotes the Jacobian of the forward model, i.e., $[F]_{ij} = \partial f_i / \partial x_j$. For our forward model $f$ (the FECV numerical model), no closed-form or efficient (e.g. adjoint-based) method exists for computing the Jacobian $F$. It must instead be approximated using finite differences, which is computationally expensive and can be inaccurate owing to the non-smoothness of $f$, as discussed in detail in a previous work~\cite{Wright2025}.

\subsection{Sequential estimation}
\label{sec:Sequential}
The Bayesian formulation above is posed for an estimate of $x$ from one measurement vector $y_t$ at a single time $t$. During RTM filling, however, measurements arrive sequentially at times $t_1, t_2, \dots$, and we require online updates of $x$ together with online control decisions. We therefore perform sequential estimation using IEKF~\cite{Maybeck1979, Bergman1999, Wan2000, Simon2006}. At each time step $t=t_k$, the posterior from the previous time is used as the prior for the current time, and the update uses only the current measurement vector $y_k$ and the model $f$ evaluated at $t_k$; the posterior covariance therefore accumulates the information contained in all past measurements without requiring them to be stored.

Computing the MAP estimate~\eqref{eq:MAP} at each step requires solving a nonlinear least-squares problem. To solve this, we require an optimisation algorithm that can minimise the objective given in \eqref{eq:MAP}. This is a computationally critical part of the algorithm, as this optimisation will involve many computations of $f$ and the gradients of $f$. Here, we use Gauss--Newton~\cite{Nocedal2006, Aster2018}, as each Gauss--Newton step linearises the forward model and solves the same normal equations that define the MAP estimate and posterior covariance of a Gaussian, so the same linearisation that drives the optimisation also yields the posterior covariance~\eqref{eq:PostCov} on convergence, at no additional cost~\cite{Aster2018}. At step $k$, the MAP estimate $x_{\rm MAP}^{(k)}$ is computed with a small number of Gauss--Newton iterations initialised at the previous estimate $x_{\rm MAP}^{(k-1)}$, and the posterior covariance $\Gamma_x^{(k)}$ is then formed by linearising about $x_{\rm MAP}^{(k)}$. Because several Gauss--Newton iterations are performed per measurement update, this is an iterated EKF rather than a standard EKF, which performs a single linearised update step. For background on Kalman filtering and its extended variants, see~\cite{Kalman1960, Gelb1974, BarShalom2001, Maybeck1979, Simon2006}.

\subsection{Gate-pressure control}
\label{sec:ControlObjective}

The aim of control here is to minimise the resin left unsaturated at the end of filling. Recalling the control-volume saturations $\psi_i(t) \in [0,1]$ from Section~\ref{sec:FECV}, we define the terminal unfilled measure as the sum of the unsaturated fractions of all $N$ control volumes at the end of filling (i.e., at a time $T$):
\begin{equation}
    \Psi := \sum_{i=1}^{N} \bigl(1 - \psi_i(T)\bigr).
    \label{eq:ControlObjectiveDef}
\end{equation}
Because the mesh elements are approximately uniform in size, $\Psi$ is roughly proportional to the total unfilled area. A perfect fill corresponds to $\Psi = 0$.

Control action selection is based on the current MAP estimate $x_{\rm MAP}^{(k)}$ of the race-tracking strengths, rather than considering a range of feasible scenarios across the posterior distribution. Given $x_{\rm MAP}^{(k)}$, we consider a \emph{control model}
\begin{equation}
    \Psi(x, t, A(t), a_f) \in \mathbb{R}_{\geq 0},
    \label{eq:ControlModel}
\end{equation}
which takes the parameter estimate, the current fill time $t$, the injection pressure history $A(t)$ summarising the current fill state, and a candidate \emph{continuation gate pressure} $a_f$, and predicts the terminal unfilled measure~\eqref{eq:ControlObjectiveDef} that would result. The optimal control action is the continuation pressure(s) of the gate(s) that minimises the predicted terminal dry measure,
\begin{align}
    a_k^\star = \arg\min_{a_f \in \mathcal{A}} \Psi\!\bigl(x_{\rm MAP}^{(k)}, t_k, A(t), a_f\bigr),
    \label{eq:control_opt}
\end{align}
where $\mathcal{A}$ is the feasible set of gate pressures. We deliberately formulate~\eqref{eq:control_opt} as a pure minimisation of the predicted terminal dry measure, with no regularisation or penalty on the control effort (for example on the change $\Delta a$ between successive actions); the consequences of this are discussed in Section~\ref{sec:FullAlgorithm}. 

This leads to an algorithm of the form of receding-horizon control~\cite{Mayne2000, Rawlings2017}, where optimal continuous control action is computed, but is only applied for one time-step before the algorithm recomputes the next best injection pressure, until filling time is complete. The full sequential loop is summarised in Algorithm~\ref{alg:online_iekf_control}.

\begin{algorithm}[H]
\caption{Online IEKF estimation}
\label{alg:online_iekf_control}
\begin{algorithmic}[1]
\Require Prior mean and covariance $(x_{\rm MAP}^{(0)},\Gamma_x^{(0)})$, models $f,\Psi$, feasible set $\mathcal{A}$
\For{$k=1,2,\dots$ until filling completes}
    \State Acquire sensor data $y_k$ at time $t_k$
    \State Compute $x_{\rm MAP}^{(k)}$ with IEKF (Gauss--Newton iterations initialised at $x_{\rm MAP}^{(k-1)}$)
    \State Compute covariance $\Gamma_x^{(k)}$ from linearisation of the model at $x_{\rm MAP}^{(k)}$
    \State Compute control action $a_k^\star \gets \arg\min_{a\in\mathcal{A}} \Psi\!\bigl(x_{\rm MAP}^{(k)},t_k,a(\cdot)|_{\tau\leq t_k},a\bigr)$
    \State Apply $a_k^\star$ on $[t_k,t_{k+1})$
    \State Record applied action $a_k^\star$ to maintain injection history $a(\cdot)|_{\tau \leq t_{k+1}}$
\EndFor
\end{algorithmic}
\end{algorithm}

Evaluating the control objective $\Psi$ and the forward operator $f$ within this online loop using the FECV solver is computationally prohibitive. Each IEKF update requires repeated evaluations of $f$ and its Jacobian, and the control optimisation~\eqref{eq:control_opt} requires many evaluations of $\Psi$ over candidate gate pressures; every such evaluation is a full filling simulation. The FECV model is nondifferentiable, requiring costly finite-difference Jacobians. Moreover, the FECV behaves discontinuously with respect to the parameters~\cite{Wright2025}, meaning that the finite-difference Jacobians can prove to be inaccurate. Even for the simple geometry considered later these costs are incompatible with real-time operation, and for the complex geometries typical of industrial RTM they become infeasible. This motivates replacing both $f$ and $\Psi$ with fast surrogate models that evaluate in milliseconds and have readily available gradients, which we develop next.

\subsection{Optimal control with a surrogate}
\label{sec:BAE}

To make the online algorithm tractable we replace the forward operator $f$ and the control model $\Psi$ with fast, differentiable surrogates, denoted $g \approx f$ and $h \approx \Psi$ respectively; their architecture and training are described in Section~\ref{sec:Surrogate}. Both evaluate in milliseconds and supply exact Jacobians via automatic differentiation, so they may be used directly within the Gauss--Newton updates and the control optimisation~\eqref{eq:control_opt}. Replacing the original FECV model with a surrogate, however, introduces a systematic discrepancy between $g$ and $f$ that, if ignored, typically leads to erroneous parameter estimates and their associated uncertainty~\cite{Wright2026}. We account for it using the Bayesian approximation error (BAE) framework~\cite{Kaipio, Kolehmainen, nicholson2023global}, which rewrites the observation model in terms of $g$:
\begin{equation}
    y_t = f(x,t,A(t)) + e = g(x,t,A(t)) + \underbrace{\bigl(f(x,t,A(t)) - g(x,t,A(t))\bigr)}_{\varepsilon(x)} + e = g(x,t,A(t)) + \eta(x),
    \label{eq:BAEmodel}
\end{equation}
where $\varepsilon(x) := f(x) - g(x)$ is the \emph{approximation error} and $\eta(x) = \varepsilon(x) + e$ is the \emph{total error}. Note that these are both assumed to be independent of the measurement time $t$ and injection pressures. However, the distribution of $\varepsilon$ is conditional on $x$ and is approximated as Gaussian, $\varepsilon \mid x \sim \mathcal{N}(\varepsilon_{0|x},\, \Gamma_{\varepsilon|x})$, with
\begin{equation}
\begin{aligned}
    \varepsilon_{0|x} &= \varepsilon_0 + \Gamma_{\varepsilon x}\,\Gamma_x^{-1}(x - x_0), \\
    \Gamma_{\varepsilon|x} &= \Gamma_\varepsilon - \Gamma_{\varepsilon x}\,\Gamma_x^{-1}\,\Gamma_{x\varepsilon},
\end{aligned}
\label{eq:BAEstats}
\end{equation}
where the statistics $\varepsilon_0$, $\Gamma_\varepsilon$ and $\Gamma_{\varepsilon x}$ are estimated offline using Monte Carlo samples. Specifically, an ensemble $x^{(1)}, \dots, x^{(\ell)}$ is drawn from the prior~\eqref{eq:Prior} and each sample is evaluated under both $f$ (LIMS) and $g$ (the surrogate) to produce approximation-error samples
\begin{equation}
    \varepsilon^{(i)} = f(x^{(i)},t^{(i)},A^{(i)}(t^{(i)})) - g(x^{(i)},t^{(i)},A^{(i)}(t^{(i)}), \quad i = 1, \dots, \ell.
    \label{eq:BAEsamples}
\end{equation}
Note that we must also generate sampled inference times and control action (injection pressure) histories for each. These should be sampled to cover the range of histories and times that might be seen during online control. The methodology we use here for generating those samples is discussed in Section~\ref{sec:FullAlgorithm}.

The required means and covariances are then computed from these samples in the standard way~\cite{Kaipio}. Incorporating the approximation error yields the BAE-corrected posterior
\begin{equation}
    \pi_{\rm BAE}(x | y) \propto \exp\!\left\{-\tfrac{1}{2}\|L_{\eta|x}(y - g(x,t,A(t)) - \varepsilon_{0|x})\|^2 - \tfrac{1}{2}\|L_x(x - x_0)\|^2\right\},
    \label{eq:PosteriorBAE}
\end{equation}
where $L_{\eta|x}^T L_{\eta|x} = \Gamma_{\eta|x}^{-1} = (\Gamma_{\varepsilon|x} + \Gamma_e)^{-1}$. Because $g(x)$ is nonlinear, this posterior is not strictly Gaussian. However, consistent with Section~\ref{sec:MAP}, we adopt a Gaussian approximation centred at the maximum a posteriori (MAP) estimate. The BAE-corrected MAP estimate and the approximate posterior covariance are
\begin{equation}
    x_{\rm MAP}^{\rm BAE} = \arg\min_{x \in \mathbb{R}^n}\;\tfrac{1}{2}\|L_{\eta|x}(y - g(x,t,A(t)) - \varepsilon_{0|x})\|^2 + \tfrac{1}{2}\|L_x(x - x_0)\|^2,
    \label{eq:MAPBAE}
\end{equation}
\begin{equation}
    \Gamma_{\rm post}^{\rm BAE} \approx \bigl(\bar{G}^T \Gamma_{\eta|x}^{-1} \bar{G} + \Gamma_x^{-1}\bigr)^{-1},
    \label{eq:PostCovBAE}
\end{equation}
where $\bar{G} = G + \Gamma_{\varepsilon x}\,\Gamma_x^{-1}$ and $G = D_x g(x^{\rm MAP}_{\rm BAE},t,A(t))$. Note that $G$ is available in closed form for the surrogate, eliminating the need for finite-difference approximations.

The BAE-corrected surrogate $g$ is used in the BAE-corrected Bayesian inference to provide $x_{\rm MAP}^{(k)}$ that drives the control optimisation~\eqref{eq:control_opt}; the control surrogate $h$ is used directly in that optimisation. The accuracy of both surrogates, and of the BAE correction, is assessed empirically for the case study of Section~\ref{sec:Fork}.

\subsection{Surrogate models and the average-pressure parameterisation}
\label{sec:Surrogate}

As mentioned in the previous section, two surrogate models are needed to make the online control tractable. Both of these surrogates can be trained offline, before any data is received. The first, $g$, approximates the pressure forward operator,
\begin{equation}
    g\bigl(t,\, x,\, A(t)\bigr) \approx f\bigl(x,\, t;\, A(t)\bigr),
    \label{eq:Surrogate}
\end{equation}
mapping the fill time $t$, the race-tracking parameters $x$, and the injection pressure history $A(t)$, to predicted sensor pressures $y \in \mathbb{R}^m$. The second surrogate, $h$, maps the same state, along with a candidate future injection pressure $a_f$ to the scalar control objective,
\begin{equation}
    h\bigl(t,\, x,\, A(t),\, a_f\bigr) \approx \Psi.
    \label{eq:ControlSurrogate}
\end{equation}
This surrogate approximates the terminal unfilled measure $\Psi$ of~\eqref{eq:ControlObjectiveDef}, which is minimised in order to determine the optimal control action.

With the two surrogate models defined as above, they both require the injection pressure history $A(t)$ as an input. When discretised this becomes the sequence $a_1, a_2, \dots, a_k$, whose length grows as filling progresses. This is a variable-length input many surrogate models cannot process directly. For the surrogate architectures that can, processing the high-dimensional vector severely inhibits the training and accuracy of the surrogate, an idea known as the curse of dimensionality. In order to make the surrogate training possible, it would be useful to compress $A(t)$ into a fixed, low-dimensional summary without any loss of information about the resin domain $\Ob(t)$. 

The following property of this BVP provides a key insight which we exploit to allow for $A(t)$ to be represented by only a scalar variable per injection gate, but retain all the key information: \textit{Consider an injection-pressure policy $A(t)$ that produces resin domain $\Omega(t)$ at time $t$. Then, an injection-pressure policy $A^*(t)$ that holds the average injection pressure in $A(t)$, $\bar{a}(t) \in \mathbb{R}^{n_g} = \tfrac{1}{t}\int_0^t a(\tau)\,d\tau$, for time $t$ will produce the same resin domain $\Omega(t)$.} The full theorem and proof of this are provided in Appendix~\ref{app:proof}. This establishes that for quasi-static Darcy flow, the filled region $\Omega(t)$ at any fixed time $t$ is determined only by the time-averaged gate pressures vector $\sigma(t)$, not the particular injection history $a(\cdot)$ that produces this average. The computational consequences of this result are significant here, as the resin flow front at time $t$ can be uniquely determined only by the average injection pressures $\bar{a}(t)$. For a given flow front location, we require only the current injection pressures, $a(t)$, at time $t$ to evaluate the pressure field at that time. With this result, the model $f$ does not need the entire injection history $A(t)$ to be uniquely determined. Instead, $f$ can accept $\bar{a}(t)$ and $a(t)$. With this change, $f$ now accepts two low-dimensional, fixed-length vectors (or scalars, if a single injection gate is used), rather than the variable-length, high-dimensional $A(t)$. The surrogate $g$ is reduced to a tractable, fixed-dimensional network $g(t, x, \bar{a}, a_c)$.

Similarly, the dry-area model $\Psi$ needs only the average injection pressure $\bar{a}(t)$ to establish flow front location at time $t$. Here, we see an additional benefit of the average-injection theorem, when looking to the future control actions. Corollary~\ref{cor:constant} in Appendix~\ref{app:proof} establishes the following: \textit{starting from a resin domain $\Omega(t)$, if some admissible time-varying injection policy on $[t, T]$ first reaches the vent at time $T$ with terminal resin domain $\Omega^*(T)$, then the constant policy equal to its time average over $[t, T]$ is admissible, reaches the vent at the same time $T$, and produces the same terminal resin domain $\Omega^*(T)$, and hence the same terminal dry area.}

This means that limiting our search to only constant future injection pressures (as in \eqref{eq:control_opt}) is not an approximation or limitation, but rather considers all potential filling outcomes. As such, the control model $\Psi$ needs only $\bar{a}(t)$ and $a_f$, the constant-pressure injection candidate for the rest of filling. The surrogate $h$ is hence reduced to a tractable, fixed-dimensional network $h(t, x, \bar{a}, a_f)$.

To illustrate the average-pressure parameterisation, we consider a simple two-gate geometry consisting of a rectangle with pressure injection gates at the bottom two corners. We compare two injection policies over a period of 15 seconds.
\begin{itemize}
    \item Policy 1: The right gate is fixed at 100kPa, while the left gate is fixed at 20kPa for 7.5 seconds, then increased to 180kPa for 7.5 seconds.
    \item Policy 2: The left gate is fixed at 100kPa, while the right gate is fixed at 20kPa for 7.5 seconds, then increased to 180kPa for 7.5 seconds.
\end{itemize}
By construction, both reach the same time-averaged gate pressures, $100$~kPa at each gate, by the final time. Figure~\ref{fig:AveragePressure} shows the resulting flow fronts at three fill times for two permeability scenarios: a homogeneous preform and race tracking along both side walls. At early times the running time-averaged pressures of the two policies differ, and the flow fronts differ accordingly. Once the running averages become equal, the fronts, and hence the resin domain $\Ob(t)$, become indistinguishable.

\begin{figure}[htbp]
    \centering
    \includegraphics[width=0.85\textwidth]{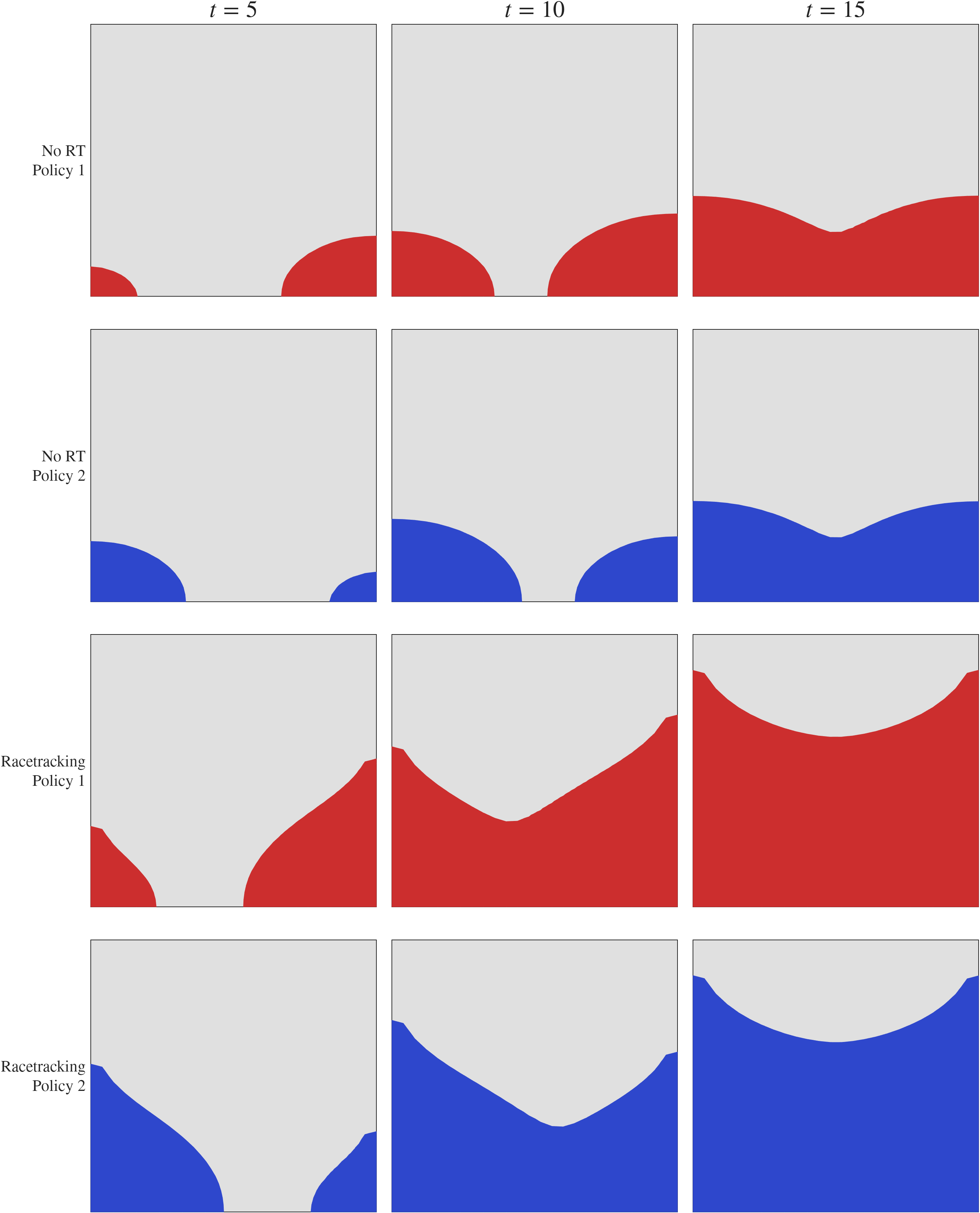}
    \caption{Flow fronts (filled regions) at three fill times ($t = 5,\,10,\,15$; columns) for two injection policies and two permeability scenarios. Policy 1 holds the right gate at $100$kPa while the left gate starts at $20$kPa and switches to $180$kPa at 7.5 seconds. Policy 2 does the opposite. The first two rows correspond to a homogeneous preform (No RT); the last two rows to race tracking along both side walls (Racetracking). At $t = 5$ and $t = 10$ the two policies have different running time-averaged gate pressures and produce different, mirror-image fronts; by $t = 15$ their time-averages coincide and the resin domain $\Ob(t)$, becomes identical.}
    \label{fig:AveragePressure}
\end{figure}

\subsection{The complete online algorithm}
\label{sec:FullAlgorithm}

With the surrogates $g$ and $h$ defined (Section~\ref{sec:Surrogate}) and the average-pressure result established (Theorem~\ref{thm:avgpressure}), the abstract Algorithm~\ref{alg:online_iekf_control} can now be made fully concrete. The two abstract models are replaced as follows.
\begin{itemize}
    \item \textbf{Estimation surrogate.} The pressure surrogate replaces the high-fidelity model in the IEKF estimation step, $f(x,t,\bar{a},a_{\rm current}) \to g(t,x,\bar{a},a_{\rm current})$. The BAE correction (Section~\ref{sec:BAE}) accounts for the discrepancy $\varepsilon(x)$ by correcting the data with the approximation-error mean $\varepsilon_{0|x}$ and the noise covariance with the approximation-error covariance $\Gamma_{\varepsilon|x}$.
    \item \textbf{Control surrogate.} The control surrogate replaces the abstract control model, $\Psi(x,t,\bar{a},a) \to h(t,x,\bar{a},a_{\rm future})$. At each timestep, we find the injection pressure(s) that minimise $h$. In this work, we limit ourselves to the case where we have only one adjustable-pressure injection gate, so this minimisation is achieved with a golden section search over the feasible pressures.
\end{itemize}
The average pressure is updated incrementally after each applied action. The full algorithm is presented in Algorithm~\ref{alg:online_iekf_control_full}:

\begin{algorithm}[h!]
\caption{Online IEKF estimation and receding-horizon control (surrogate-based)}
\label{alg:online_iekf_control_full}
\begin{algorithmic}[1]
\Require Prior mean and covariance $(x_{\rm MAP}^{(0)},\Gamma_x^{(0)})$, surrogates $g,h$, BAE statistics $(\varepsilon_0,\Gamma_\varepsilon,\Gamma_{\varepsilon x})$, feasible set $\mathcal{A}$
\For{$k=1,2,\dots$ until filling completes}
    \State Acquire sensor data $y_k$ at time $t_k$
    \State Compute $x_{\rm MAP}^{(k)}$ with IEKF using $g$ and BAE correction (Gauss--Newton, initialised at $x_{\rm MAP}^{(k-1)}$)
    \State Compute covariance $\Gamma_x^{(k)}$ from linearisation of the BAE-corrected model at $x_{\rm MAP}^{(k)}$
    \State Compute control action $a_k^\star \gets \arg\min_{a\in\mathcal{A}}\, h(x_{\rm MAP}^{(k)},\,t_k,\,\bar{a}_k,\,a)$
    \State Apply $a_k^\star$ on $[t_k,t_{k+1})$
    \State Update $\bar{a}_{k+1} \gets \dfrac{t_k\,\bar{a}_k + (t_{k+1}-t_k)\,a_k^\star}{t_{k+1}}$
\EndFor
\end{algorithmic}
\end{algorithm}

\begin{figure}[h!]
\centering
\begin{tikzpicture}[
    dataflow/.style={draw=blue!60, line width=1.2mm,
        -{Triangle[length=4mm,width=4mm]}},
    trainflow/.style={draw=yellow!80!orange, line width=1.2mm,
        -{Triangle[length=4mm,width=4mm]}},
    input/.style={rounded corners, fill=gray!15, draw,
        text width=2.3cm, align=center, minimum height=1.1cm, font=\small},
    process/.style={rounded corners, fill=blue!15, draw,
        text width=2.4cm, align=center, minimum height=1.1cm, font=\small},
    nn/.style={circle, fill=yellow!20, draw, minimum size=2.6cm},
    approx/.style={rounded corners, fill=yellow!20, draw,
        text width=6.8cm, align=center, minimum height=1.1cm, font=\small},
    output/.style={rounded corners, fill=blue!15, draw,
        text width=3.0cm, align=center, minimum height=1.1cm, font=\small},
    train/.style={rounded corners, fill=blue!70!cyan, draw,
        text width=1.9cm, align=center, minimum height=1.2cm,
        font=\small\bfseries, text=white},
]

\node[input] (prior)   at (-4,  3) {$x_{\rm MAP}^{(0)} = x_0$\\$\Gamma_x^{(0)}=\Gamma_x$};
\node[process] (data)  at (0,  3) {New data\\$y_k$};
\node[process] (IEKF)  at (0,  1) {IEKF update\\$x_{\rm MAP}^{(k)},\Gamma_x^{(k)}$};
\node[process] (control) at (4, 1) {Control\\ optimisation\\$a_k^\star$};
\node[output] (apply) at (4, 3) {Apply $a_k^\star$, update $\bar{a}_{k+1}$};

\draw[<-] (2,3.2) -- (2,3.75);
\node at (2,4) {Increment $k$};

\draw[dataflow] (prior.east) -- (data.west);
\draw[dataflow] (data.south) -- (IEKF.north);
\draw[dataflow] (IEKF.east) -- (control.west);
\draw[dataflow] (control.north) -- (apply.south);
\draw[dataflow] (apply.west) -- (data.east);

\node[approx, anchor=south] (ginfo) at (-6,-2) {\begin{itemize}[leftmargin=*]
    \item NN surrogate $g(t,x,\bar{a},a_c)$
    \item Bayesian inference with BAE correction
    \item Gauss--Newton iterations for MAP estimation
    \item Gaussian approximation of posterior
\end{itemize}};

\node[approx, anchor=south] (hinfo) at (5,-2) {\begin{itemize}[leftmargin=*]
    \item NN surrogate $h(t,x,\bar{a},a_f)$
    \item Golden-section search over $\mathcal{A}$
    \item Receding-horizon control strategy
\end{itemize}};

\draw[trainflow] (IEKF.south) |- (ginfo.east);
\draw[trainflow] (control.east) -| ($(hinfo.north) + (1,0)$);
\end{tikzpicture}
\caption{Schematic of the online estimation and control loop. At each measurement time, new sensor data $y_k$ is used by the IEKF (with surrogate $g$ and BAE correction) to update the MAP estimate $x_{\rm MAP}^{(k)}$ and posterior covariance $\Gamma_x^{(k)}$. The updated estimate drives the control optimisation via surrogate $h$, producing the gate pressure $a_k^\star$ applied until the next measurement. The running average $\bar{a}$ is updated after each step. Blue arrows indicate the loop process, while yellow arrows indicate further details about a particular step.}
\label{fig:loop}
\end{figure}

As noted in Section~\ref{sec:ControlObjective}, the objective~\eqref{eq:control_opt} carries no penalty on the change in control effort ($\Delta a$). In an unrestrained system this could in principle lead to undesirable high-frequency \emph{chattering}, or ``slamming'' between the maximum and minimum injection pressures. Under the receding-horizon strategy, however, the race-tracking estimates $x_{\rm MAP}^{(k)}$ tend to evolve smoothly after the first few measurements. While continuity of $h$ alone does not guarantee that its minimiser cannot jump abruptly when $x_{\rm MAP}^{(k)}$ changes slightly, stable actions were observed in our experiments once the estimates had stabilised.

An important caveat is that we use only the MAP estimate when selecting the optimal control action, rather than accounting directly for the full posterior. We choose this approach for simplicity and computational efficiency. This raises a natural question: why do we still carry the posterior covariance $\Gamma_x^{(k)}$ forward at each step? Although the covariance is not explicitly used when selecting the control action, it is used in the next IEKF update as the prior covariance, encoding the cumulative effect of all previous measurements on the race-tracking estimate without requiring those measurements to be stored explicitly.

\section{Numerical Example}
\label{sec:Fork}

To demonstrate the proposed online estimation and control framework, we consider a simple fork geometry with $n=6$ potential race-tracking regions, one independently controllable auxiliary gate ($n_g=1$; the other gate has a fixed injection pressure), and $m=12$ sensors.

\begin{figure}[h!]
\centering
\begin{tikzpicture}[x=1mm,y=1mm, scale=0.5]
    \draw[fill=gray!10] (0,0) -- (0,100) -- (120,100) -- (120,0) -- (100,0) -- (100,80) -- (20,80) -- (20,0) -- cycle;

    \draw[fill=yellow, fill opacity=0.3, draw=black, dashed] (0,0) -- (0,100) -- (4,96) -- (4,0) -- cycle;
    \draw[fill=yellow, fill opacity=0.3, draw=black, dashed] (16,0) -- (16,84) -- (20,80) -- (20,0) -- cycle;
    \draw[fill=yellow, fill opacity=0.3, draw=black, dashed] (100,0) -- (100,80) -- (104,84) -- (104,0) -- cycle;
    \draw[fill=yellow, fill opacity=0.3, draw=black, dashed] (116,0) -- (116,96) -- (120,100) -- (120,0) -- cycle;
    \draw[fill=yellow, fill opacity=0.3, draw=black, dashed] (4,96) -- (116,96) -- (120,100) -- (0,100) -- cycle;
    \draw[fill=yellow, fill opacity=0.3, draw=black, dashed] (20,80) -- (100,80) -- (104,84) -- (16,84) -- cycle;

    \node[circle, fill=blue, inner sep=1.8pt] at (10,15) {};
    \node[circle, fill=blue, inner sep=1.8pt] at (5,40) {};
    \node[circle, fill=blue, inner sep=1.8pt] at (15,40) {};
    \node[circle, fill=blue, inner sep=1.8pt] at (10,65) {};
    \node[circle, fill=blue, inner sep=1.8pt] at (10,90) {};

    \node[circle, fill=blue, inner sep=1.8pt] at (110,15) {};
    \node[circle, fill=blue, inner sep=1.8pt] at (105,40) {};
    \node[circle, fill=blue, inner sep=1.8pt] at (115,40) {};
    \node[circle, fill=blue, inner sep=1.8pt] at (110,65) {};
    \node[circle, fill=blue, inner sep=1.8pt] at (110,90) {};

    \node[circle, fill=blue, inner sep=1.8pt] at (40,90) {};
    \node[circle, fill=blue, inner sep=1.8pt] at (80,90) {};

    \node[circle, fill=black, inner sep=1.8pt] (fixed) at (10,0) {};
    \node[below=2pt] at (10,0) {Fixed};

    \node[circle, fill=black, inner sep=1.8pt] (ctrl) at (110,0) {};
    \node[below=2pt] at (110,0) {Controllable};

    \node[circle, draw=black, fill=white, inner sep=1.8pt] (ctrl) at (60,100) {};
    \node[above=2pt] at (60,100) {Vent};

    \draw[|-|] (0,-25) -- node[below]{120\,mm} (120,-25);
    \draw[|-|] (-25,0) -- node[above, rotate=90]{100\,mm} (-25,100);
    \draw[|-|] (20,-12) -- node[below]{80\,mm} (100,-12);
    \draw[|-|] (-10,0) -- node[above, rotate=90]{80\,mm} (-10,80);
\end{tikzpicture}
\caption{Fork geometry for the numerical examples illustrated in this section. The fixed and controllable pressure injection gates are annotated, along with the single outlet vent. Race-tracking regions are indicated in yellow, and pressure sensors in blue.}
\label{fig:fork_staple}
\end{figure}

The fork geometry is shown in Figure~\ref{fig:fork_staple}. The two channels are connected by a horizontal section at the top, with a single outlet vent located at the middle of the top edge. The left channel has a fixed injection pressure of $10^5$~Pa (100~kPa), while the right channel has an independently controllable gate. If race tracking occurs in one of the regions on the left side of the fork, the controllable gate should increase its pressure so that the two flow fronts meet in the middle, where the outlet is located. If race tracking is stronger on the right side, the pressure should decrease for the same reason.

Throughout our numerical experiments, all pressure values are given in units of Pascals (Pa). To account for realistic measurement uncertainty, the measurement noise covariance matrix $\Gamma_e$ is taken to be diagonal with all entries set to $10^{6}~\text{Pa}^2$ (corresponding to a standard deviation of $1~\text{kPa}$ on each sensor).

The geometry is meshed into triangular elements with 1618 nodes and 2934 elements for simulation through LIMS. During filling, one pressure measurement is acquired from each sensor every second (a $1$~Hz measurement rate), and the auxiliary gate pressure is re-optimised at each measurement over the feasible set $\mathcal{A} = [0, 200]~\text{kPa}$.

\subsection{Neural network architectures and training}
\label{sec:NN}

As described in Section~\ref{sec:Surrogate}, two neural network surrogates are trained offline: $g$ for inference and $h$ for control. As shown in Figure~\ref{fig:NNtraining}, the training data for both surrogates is generated by evaluating the FECV model on an ensemble of samples drawn from the prior distribution over $x$ and a range of gate-pressure schedules. In addition, our ensemble contains all the solve times from each LIMS run. For the control actions, the samples are generated by taking uniform, independent random values between $0$kPa and $200$kPa. For the control surrogate $h$, the same samples can be used. For each LIMS solve time $t$, we average the elements of $a$ for times greater than $t$. This is the sampled $a_f$, and the terminal sum of the inverse saturations is the corresponding $\Psi$. An important note here is that each LIMS forward model produces many training samples, as each solve time is used as a separate piece of training data for each network.

\begin{figure}[h]
\centering
\begin{tikzpicture}[
    dataflow/.style={draw=blue!60, line width=1.2mm,
        -{Triangle[length=4mm,width=4mm]}},
    trainflow/.style={draw=yellow!80!orange, line width=1.2mm,
        -{Triangle[length=4mm,width=4mm]}},
    input/.style={rounded corners, fill=gray!15, draw,
        text width=2.3cm, align=center, minimum height=1.1cm, font=\small},
    process/.style={rounded corners, fill=blue!15, draw,
        text width=2.4cm, align=center, minimum height=1.1cm, font=\small},
    nn/.style={circle, fill=yellow!20, draw, minimum size=2.6cm},
    approx/.style={rounded corners, fill=yellow!20, draw,
        text width=2.8cm, align=center, minimum height=1.1cm, font=\small},
    output/.style={rounded corners, fill=blue!15, draw,
        text width=3.0cm, align=center, minimum height=1.1cm, font=\small},
    train/.style={rounded corners, fill=blue!70!cyan, draw,
        text width=1.9cm, align=center, minimum height=1.2cm,
        font=\small\bfseries, text=white},
]


\node[input] (x)    at (0,  4.2) {$x \sim \pi(x)$\\[1pt] RT strengths};
\node[input] (a)    at (0,  2.5) {$a(t)$ injection\\[1pt] pressures};
\node[input] (abar) at (0,  0.8) {$\bar{a}$, average\\[1pt] injection pressure};
\node[input] (afut) at (0, -1.3) {$a_{\mathrm{future}}$\\[1pt] future actions};
\node[input] (tgen) at (0, -3.0) {$t$\\[1pt] change times};

\node[process] (lims) at (4, 0.8) {LIMS\\[1pt] $x,a(t)\!\to\!t,p,\Psi$};
\node[output] (t)       at (7.6,  0.8) {$t$\\[1pt] Event times};

\node[nn] (g) at (7.6,  4.2) {};
\node[font=\small] at (7.6, 3.4) {$g(x)$};

\node[nn] (h) at (7.6, -2.6) {};
\node[font=\small] at (7.6, -3.4) {$h(x)$};

\foreach \nn in {g, h}{
    \foreach \y in {0.42, 0, -0.42}{
        \fill[black!70] ($(\nn)+(-0.55,\y)$) circle (0.055);
        \fill[black!70] ($(\nn)+(0.00, \y)$) circle (0.055);
        \fill[black!70] ($(\nn)+(0.55, \y)$) circle (0.055);
    }
    \foreach \ya in {0.42,0,-0.42}{
        \foreach \yb in {0.42,0,-0.42}{
            \draw[black!55, line width=0.28pt]
                ($(\nn)+(-0.55,\ya)$)--($(\nn)+(0,\yb)$);
            \draw[black!55, line width=0.28pt]
                ($(\nn)+(0,\ya)$)--($(\nn)+(0.55,\yb)$);
        }
    }
}

\node[approx] (gapprox) at (11.4,  4.2) {$g(x,t,\bar{a},a_{\mathrm{current}})$\\[1pt] $\approx p$};
\node[output]  (p)       at (11.4,  2.3) {$p$\\[1pt] Pressures at event times};

\node[approx] (happrox) at (11.4, -2.6) {$h(x,t,\bar{a},a_{\mathrm{future}})$\\[1pt] $\approx \Psi$};
\node[output]  (da)      at (11.4, -0.7) {$\Psi$\\[1pt] Dry region after filling};

\node[train] (traing) at (14.7,  3.25) {Train $g(x)$};
\node[train] (trainh) at (14.7, -1.65) {Train $h(x)$};


\draw[trainflow] (x.north) -- ++(0,0.25) -- ++(3.35,0) |- (g.west);
\draw[trainflow] (x.west) -- ++(-0.6,0) -- ++(0,-8.2) -- ($(h.west)+(-3.0,-1.4)$) |- (h.west);
\draw[dataflow] (x.east) -- ++(0.5,0) |- (lims.west);
\draw[dataflow] ($(a.east) + (0,-0.1)$) -- ++(0.5,0) |- (lims.west);
\draw[trainflow] ($(a.east) + (0,0.1)$) -- ++(2.8,0) -- ++(0,0.55) |- (g.west);
\draw[dataflow] (a.south) -- (abar.north);
\draw[trainflow] (abar.east) -- ++(0.15,0) -- ++(0,0.75) -- ++(2.65,0) |- (g.west);
\draw[trainflow] (abar.south) -- ++(0,-0.25) -- ++(2.5,0) |- (h.west);
\draw[trainflow] (afut.east) -- ++(1.21,0) |- (h.west);
\draw[trainflow] (tgen.east) -- ++(0.35,0) |- (h.west);

\draw[dataflow] (lims.east) |- (t.west);
\draw[dataflow] ($(lims.north)+(0.6,0)$) |- (p.west);
\draw[dataflow] ($(lims.south)+(0.6,0)$) |- (da.west);
\draw[trainflow] (t.north) -- (g.south);

\draw[trainflow] (g.east) -- ++(0.28,0) |- (gapprox.west);
\draw[trainflow] (h.east) -- ++(0.28,0) |- (happrox.west);

\draw[trainflow] ($(gapprox.east)+(0,0.28)$) -| (traing.north);
\draw[trainflow] ($(p.east)+(0,-0.32)$) -| (traing.south);
\draw[trainflow] ($(happrox.east)+(0,-0.32)$) -| (trainh.south);
\draw[trainflow] ($(da.east)+(0,0.28)$) -| (trainh.north);

\node[anchor=west, font=\footnotesize] at (0, -4.4) {%
  \tikz[baseline=-0.5ex]{
      \draw[dataflow](0,0)--(0.8,0);}\enspace Data generation\qquad
  \tikz[baseline=-0.5ex]{
      \draw[trainflow](0,0)--(0.8,0);}\enspace Training
};
\end{tikzpicture}
\caption{Schematic of the neural network surrogate training pipeline. The forward model (LIMS) is used to generate training data for both the pressure surrogate $g$ and the control surrogate $h$. The surrogates are trained to predict the pressures at event times and the final dry area, respectively, based on the inputs of the race-tracking parameters, time, and gate-pressure information.}
\label{fig:NNtraining}
\end{figure}

The neural network $g$ maps the input $[x, t, \bar{a}, a_c] \in \mathbb{R}^{n+2n_g+1} = \mathbb{R}^{12}$ and the neural network $h$ maps the input $[x, t, \bar{a}, a_f] \in \mathbb{R}^{n+2n_g+1} = \mathbb{R}^{12}$. Although the networks are distinct, the architecture employed here is similar, only differing in the final output layer to encode their respective output sizes. The input is parsed through three stages of increasing dimension (64, 128, 256). Each stage of the network consists of a linear transformation followed by layer normalization and a nonlinear activation, with dropout applied for regularization. The activations used are standard smooth or piecewise-linear: the Gaussian error linear unit $\mathrm{GELU}(z) = z\,\Phi(z)$, where $\Phi$ is the standard normal cumulative distribution function; the self-gated $\mathrm{Swish}(z) = z\,\sigma(z)$, where $\sigma$ is the logistic sigmoid; and the leaky rectified linear unit $\mathrm{LeakyReLU}(z) = \max(z, 0.01 z)$. A bottleneck residual block (256 $\rightarrow$ 512 $\rightarrow$ 256) is then employed, incorporating a skip connection from the encoder output and using a LeakyReLU activation. The decoder mirrors the encoder by progressively reducing dimensionality (256 $\rightarrow$ 128 $\rightarrow$ 64), and incorporates skip connections from corresponding encoder stages to preserve multi-scale feature information. A final linear layer maps the latent representation to the output of size depending on the network ($m=12$ size in the case of $g$, $1$ in the case of $h$). Full architectural details are provided in Table~\ref{tab:nn_arch}.

\begin{table}[h]
\centering
\caption{Architecture of both surrogate networks. For the pressure surrogate $g$, the encoder--bottleneck--decoder structure takes the input features $[x, t, \bar{a}, a_c] \in \mathbb{R}^{n+2n_g+1}$ and returns the $m$ sensor pressures; the control surrogate $h$ shares the same structure, accepting inputs features $[x, t, \bar{a}, a_f] \in \mathbb{R}^{n+2n_g+1}$ and outputting a single output neuron (the terminal dry area). For each stage we list the layer type, output dimension, activation function, normalization scheme, dropout rate, and any skip or residual connection. The control surrogate uses the same architecture with $m$ replaced by $1$ in the output layer.}
\label{tab:nn_arch}
\begin{tabular}{lllllll}
\toprule
Stage & Layer & Output Dim & Activation & Normalization & Dropout & Skip \\
\midrule
Input  & Feature input & $n+2n_g+1$ & -- & z-score & -- & -- \\

Enc1   & Fully connected & 64  & GELU  & LayerNorm & 0.1  & -- \\
Enc2   & Fully connected & 128 & Swish & LayerNorm & 0.1  & -- \\
Enc3   & Fully connected & 256 & GELU  & LayerNorm & 0.15 & -- \\

Bottleneck & FC $\rightarrow$ FC & 512 $\rightarrow$ 256 & LeakyReLU & LayerNorm & -- & Residual (Enc3) \\

Dec1   & Fully connected & 128 & Swish & LayerNorm & -- & Skip (Enc2) \\
Dec2   & Fully connected & 64  & GELU  & LayerNorm & -- & -- \\

Output & Fully connected & $m/1$ & -- & -- & -- & -- \\
\bottomrule
\end{tabular}
\end{table}

LIMS is run 100,000 times for training-data generation, producing approximately 159 million samples of the form $(x, \bar{a}, a_c, a_f, \Psi, t, p)$, where $\bar{a}$, $a_c$ and $a_f$ are computed from $a(t)$. It is important to emphasize that each of the 100,000 forward runs produces many individual data points - one for each time step that the fluid pressure is solved. An additional 10,000 LIMS simulations are run for testing and validation. In practice, however, we do not train on the full generated dataset: a random 5\% subset of the training data points is used, with random 1\% subsets of the validation and test data points. This downsampling is necessary because the full dataset is too large for efficient end-to-end training, and using random subsets keeps GPU memory usage, data-loading overhead, and epoch times tractable while ensuring we capture a diverse, representative range of race-tracking scenarios. Training is done in two stages using the adaptive moment estimation (Adam) optimizer~\cite{kingma2014adam} and a batch size of 512. In the first stage, training is performed for 3 epochs with a learning rate of $10^{-3}$, and the loss function is the mean squared error (MSE) between the predicted and true sensor pressures at the event times. After the first stage is complete, a ReLU activation layer is added to the output layer to ensure non-negativity of the predicted pressures, and training continues for an additional epoch with a learning rate of $10^{-4}$. Both training loops are allowed to terminate early if there is no validation improvement in 500 iterations. To ensure stable gradients and efficient weight updates, all target pressure values are scaled by a factor of $10^5$~Pa during training. The mean squared errors (MSE) reported during this phase reflect this scaled space. The first round of training produced a scaled validation loss of $10^{-3}$, and after the second round this reduced to $7.5 \times 10^{-4}$. In physical units, this final MSE corresponds to a root mean square error (RMSE) of approximately $2.74 \times 10^3$~Pa (or $2.74$~kPa) per sensor. The training and validation loss over time is visualised in Appendix~\ref{app:hTraining}, Figure~\ref{fig:NNtrainingloss}. This training takes a total of 2100 seconds on a single RTX 5080 NVIDIA GPU.

Training is done in two stages to allow the model to learn the general flow physics first and then refine the predictions to ensure non-negativity of the pressures. Training with the ReLU activation layer from the start can cause the gradients of many network weights and biases to become zero early on, which can prevent the model from converging. By first training without the ReLU, the model learns a good initial representation of the flow physics, after which the ReLU can be added to fine-tune the predictions while preserving the learned representation.

For $h(x)$, we use an identically structured network, but with only one output neuron (representing $\Psi$). There is only one training session with a learning rate of $10^{-3}$, and we train with as many epochs as needed for the validation loss to begin increasing. Training takes approximately 1900s on a single RTX 5080 NVIDIA GPU, and produces a final validation loss of $\approx 155$ (in squared node counts), after approximately 3 epochs. The corresponding training and validation loss curves are shown in Appendix~\ref{app:hTraining}, Figure~\ref{fig:hTrainingLoss}.

Once the surrogates have finished training, we compute the BAE for $g$ as described in Section~\ref{sec:BAE}. We use the testing set of parameters $x$ sampled from the prior, together with randomly generated values for $\bar{a}$ and $a_c$ (sampled uniformly between 0~kPa and 200~kPa, similar to the training data). For these input sets, we evaluate $f$ and $g$ using LIMS and the first surrogate, respectively. We then compute $\varepsilon_{0|x}$ and $\Gamma_{\varepsilon|x}$ from these samples using the standard formulas for Gaussian means and covariances.

\paragraph{Computational performance.}
The surrogates render the online loop real-time capable. A single full-fill LIMS simulation of this fork geometry takes approximately $0.65$~s. Carrying out an IEKF update with the FECV model would require a finite-difference Jacobian ($n+1$ forward solves per linearisation) over five Gauss--Newton iterations, that is, roughly $5(n+1)=35$ full-fill simulations, taking approximately $23$~s per measurement update - far exceeding the one-second interval between measurements. With the surrogate $g$, which supplies a closed-form Jacobian via automatic differentiation, the same update completes in approximately $0.34$~s, so that estimation and control comfortably keep pace with the $1$~Hz measurement rate.

\subsection{Results}
\label{sec:ForkResults}
We define a successful fill as one with terminal dry area (or, more specifically, the proxy $\Psi$) below 2\% of the mesh, i.e., fewer than 2\% of nodes remain empty.
We first consider a manufactured race-tracking scenario in which one race-tracking parameter is dominant ($x_2=4$) and the remaining parameters are zero. In this case, our algorithm quickly identifies the race-tracking within the first few seconds, and increases the auxiliary-gate pressure above the fixed-gate pressure of 100~kPa, thereby rebalancing the advancing fronts and reducing premature vent arrival on the faster side. A representative snapshot is shown in Figure~\ref{fig:fork_manufactured_frames}, where the controlled front is more closely aligned with the outlet region than the uncontrolled counterpart.

\begin{figure}[h!]
\centering

\begin{minipage}[b]{0.24\textwidth}\centering $t = 1\,\mathrm{s}$\end{minipage}\hfill
\begin{minipage}[b]{0.24\textwidth}\centering $t = 20\,\mathrm{s}$\end{minipage}\hfill
\begin{minipage}[b]{0.24\textwidth}\centering $t = 40\,\mathrm{s}$\end{minipage}\hfill
\begin{minipage}[b]{0.24\textwidth}\centering Complete\end{minipage}

\vspace{1mm}

\begin{subfigure}[b]{0.24\textwidth}
    \centering
    \includegraphics[width=\linewidth]{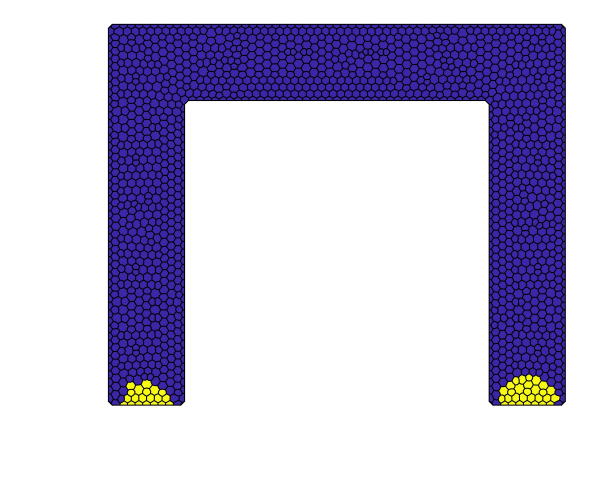}
\end{subfigure}\hfill
\begin{subfigure}[b]{0.24\textwidth}
    \centering
    \includegraphics[width=\linewidth]{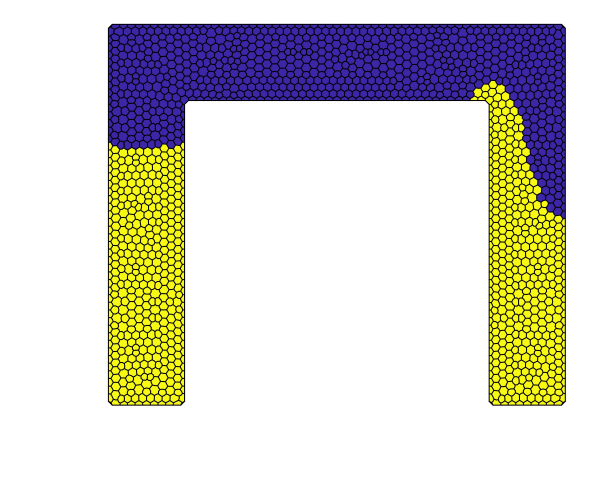}
\end{subfigure}\hfill
\begin{subfigure}[b]{0.24\textwidth}
    \centering
    \includegraphics[width=\linewidth]{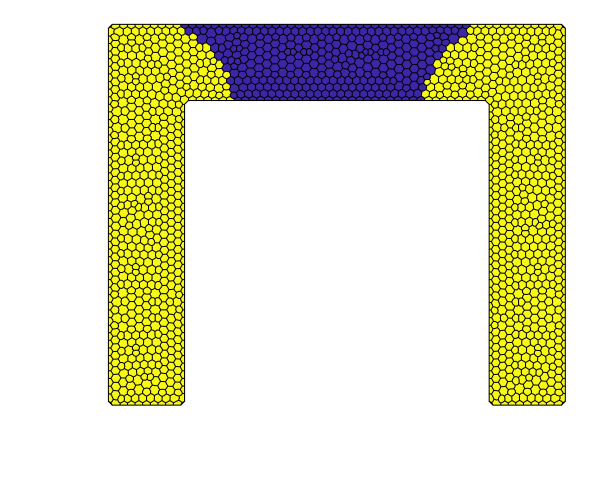}
\end{subfigure}\hfill
\begin{subfigure}[b]{0.24\textwidth}
    \centering
    \includegraphics[width=\linewidth]{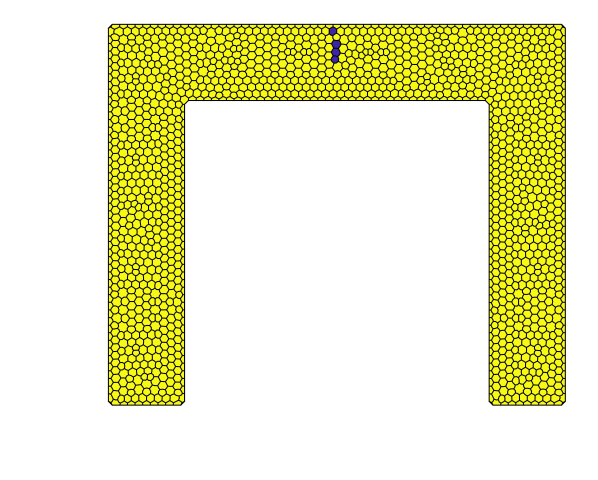}
\end{subfigure}

\vspace{-2mm}

\begin{subfigure}[b]{0.24\textwidth}
    \centering
    \includegraphics[width=\linewidth]{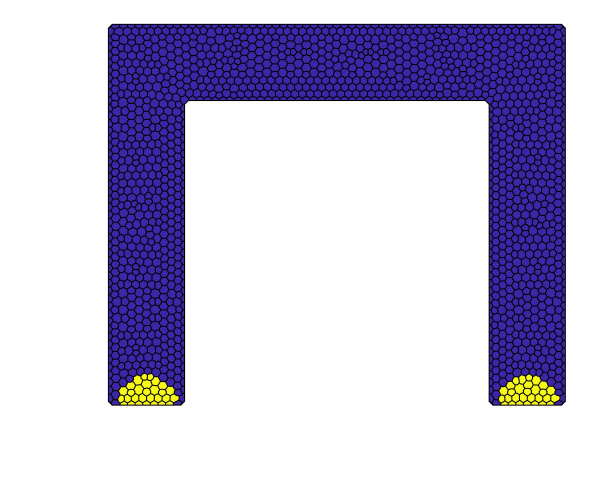}
\end{subfigure}\hfill
\begin{subfigure}[b]{0.24\textwidth}
    \centering
    \includegraphics[width=\linewidth]{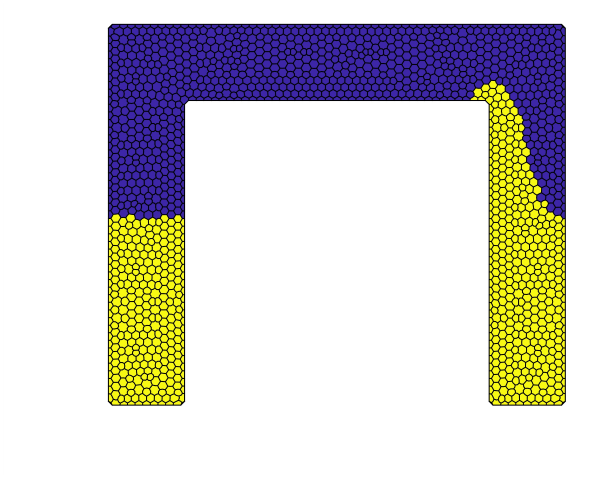}
\end{subfigure}\hfill
\begin{subfigure}[b]{0.24\textwidth}
    \centering
    \includegraphics[width=\linewidth]{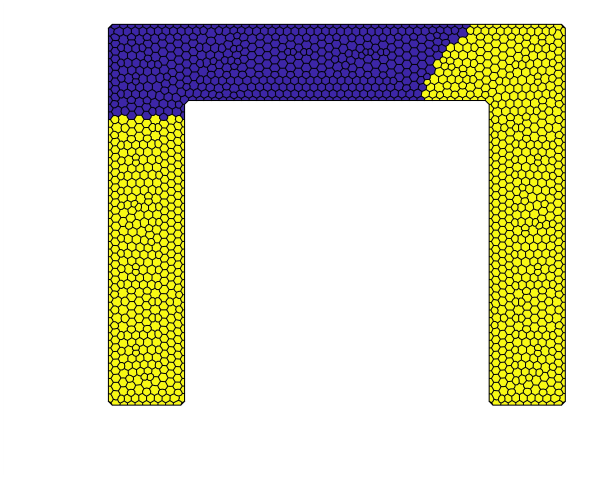}
\end{subfigure}\hfill
\begin{subfigure}[b]{0.24\textwidth}
    \centering
    \includegraphics[width=\linewidth]{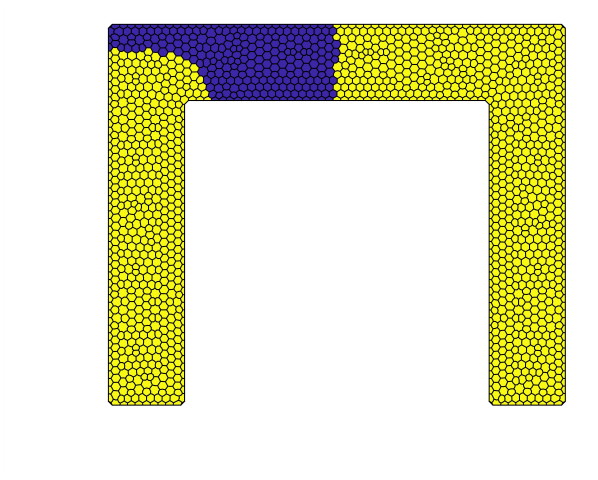}
\end{subfigure}

\vspace{2mm}

\begin{subfigure}[b]{0.24\textwidth}
    \centering
    \includegraphics[width=\linewidth]{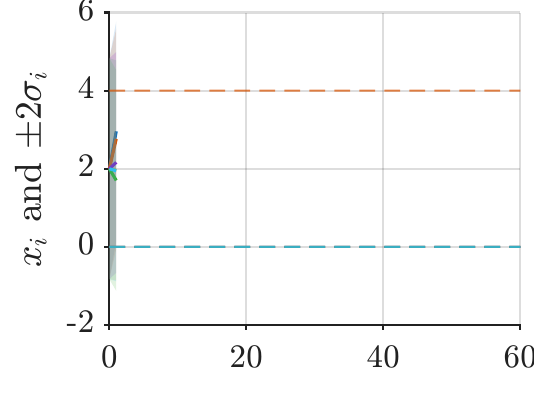}
\end{subfigure}\hfill
\begin{subfigure}[b]{0.24\textwidth}
    \centering
    \includegraphics[width=\linewidth]{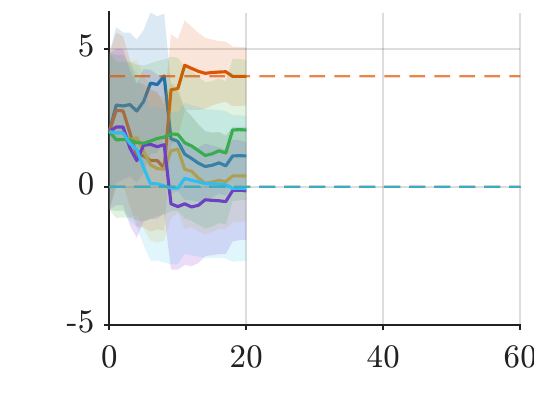}
\end{subfigure}\hfill
\begin{subfigure}[b]{0.24\textwidth}
    \centering
    \includegraphics[width=\linewidth]{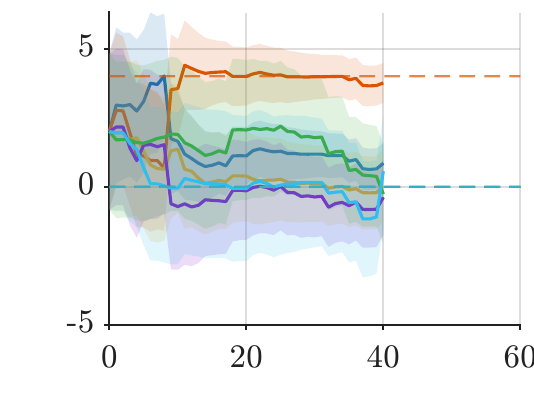}
\end{subfigure}\hfill
\begin{subfigure}[b]{0.24\textwidth}
    \centering
    \includegraphics[width=\linewidth]{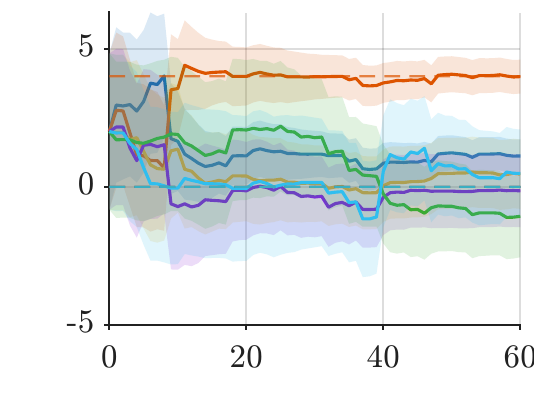}
\end{subfigure}

\vspace{0mm}

\begin{subfigure}[b]{0.24\textwidth}
    \centering
    \includegraphics[width=\linewidth]{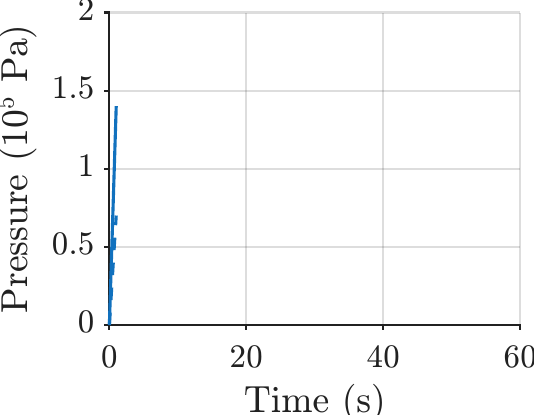}
\end{subfigure}\hfill
\begin{subfigure}[b]{0.24\textwidth}
    \centering
    \includegraphics[width=\linewidth]{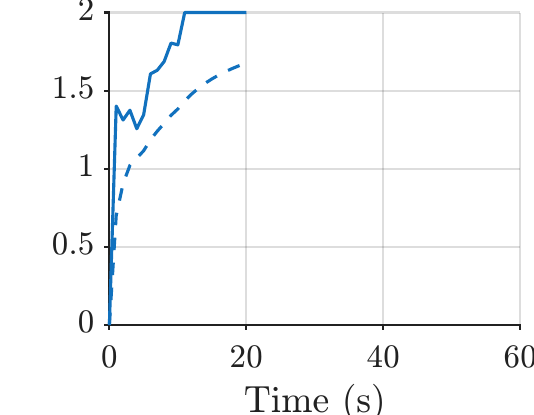}
\end{subfigure}\hfill
\begin{subfigure}[b]{0.24\textwidth}
    \centering
    \includegraphics[width=\linewidth]{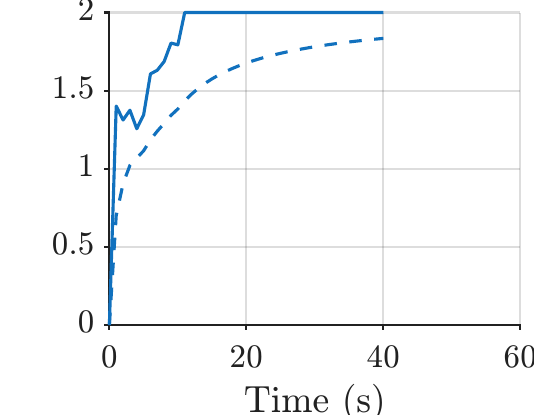}
\end{subfigure}\hfill
\begin{subfigure}[b]{0.24\textwidth}
    \centering
    \includegraphics[width=\linewidth]{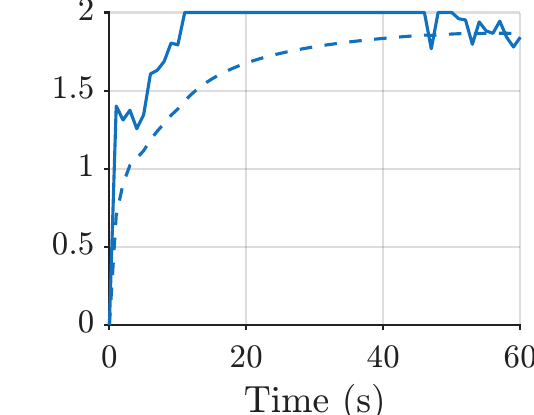}
\end{subfigure}

\caption{
Evolution of the closed-loop controller during the filling process at four representative timesteps (columns). The top row depicts the actively controlled flow front, compared against the nominal, uncontrolled flow front in the second row (auxiliary gate fixed at $100\,\mathrm{kPa}$). The third row illustrates the filter's estimation of the race-tracking variables over time, correctly identifying the active channel (solid lines are means, shaded regions are $\pm 2$ standard deviations, and dashed lines represent the ground truth). The final row shows the gate pressure control commands generated by the network with a solid line and the dashed line shows the time-averaged gate pressure $\bar{a}$. Note that for the bottom two rows, the previous plots are simply subsets of the final column.
}
\label{fig:fork_manufactured_frames}
\end{figure}

To quantify performance at population level, we evaluated 100 Monte Carlo simulations with race-tracking strengths sampled from the prior. A run is deemed successful if the terminal dry area is below 2\% of the mesh (fewer than 33 empty nodes out of 1618). Under this criterion, 18/100 no-control runs are successful, compared with 42/100 when the controlled actions are evaluated with the FECV model $\Psi$. The surrogate-based controller $h$ predicts 58/100 successes, which is greater than the actual number of successes (as expected, since the control surrogate $h$ is optimistic). The terminal dry-node distributions and the per-run improvements are shown in Figure~\ref{fig:fork_batch_combined}.

\begin{figure}[h!]
\centering
\begin{minipage}[t]{0.48\textwidth}
    \centering
    \includegraphics[width=\linewidth]{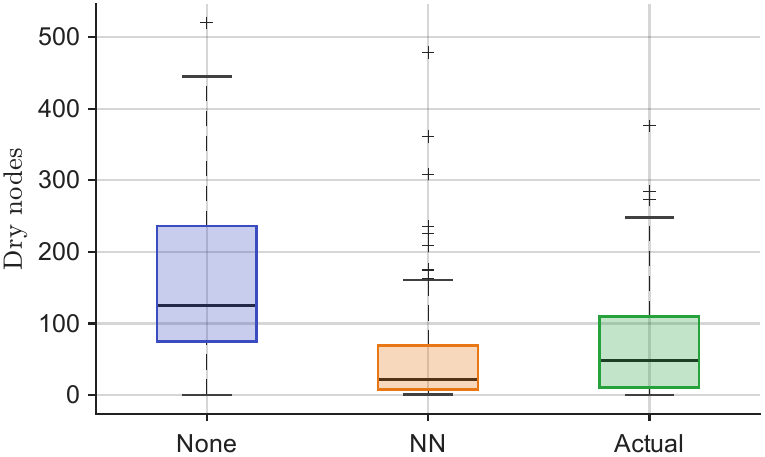}
\end{minipage}\hfill
\begin{minipage}[t]{0.48\textwidth}
    \centering
    \includegraphics[width=\linewidth]{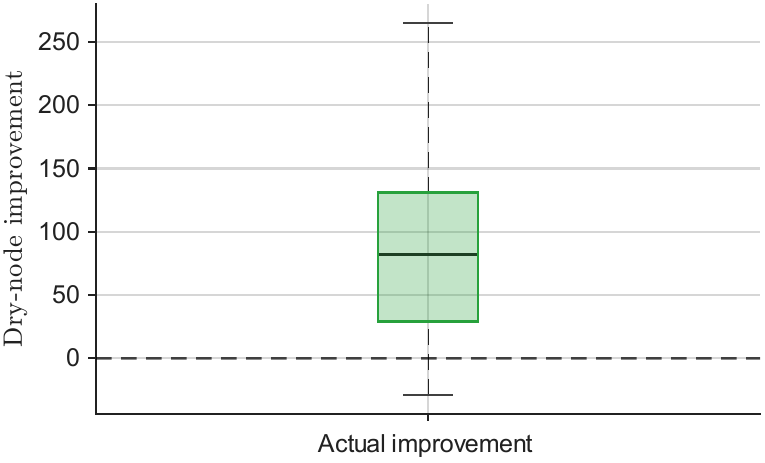}
\end{minipage}
\caption{Dry-node outcomes for the fork geometry across 100 Monte Carlo simulations. Left: terminal dry-node counts at the end of filling for no control (None), surrogate-predicted control (NN), and controlled runs evaluated with the high-fidelity model (Actual); the controlled strategy shifts the distribution downward, with surrogate-based predictions optimistic but directionally consistent with the high-fidelity evaluations. Right: distribution of the per-run improvement in dry-node count (no control minus controlled) across the same 100 simulations, where positive values indicate fewer dry nodes under control.}
\label{fig:fork_batch_combined}
\end{figure}

\subsection{Controllability}

The performance of the control strategy depends strongly on the range of control actions available. In this case, the single auxiliary gate cannot always be adjusted to achieve perfect filling when constrained to 0-200~kPa, so the control strategy cannot guarantee perfect filling in all scenarios. This remains true even when the online Bayesian methodology predicts the race-tracking strengths accurately. In some cases, race tracking occurs along the top of the mould, and the algorithm does not have sufficient time to estimate this effect and adjust the gate pressure before the resin reaches the vent.

To assess the controllability of the system, we consider each race-tracking scenario in turn, trialling many candidate auxiliary gate pressures, each held constant for the entire fill. Restricting attention to constant pressures incurs no loss of generality: by Corollary~\ref{cor:constant} in Appendix~\ref{app:proof}, the terminal configuration reached by any admissible time-varying policy is reproduced by the constant pressure equal to its time average, which reaches the vent at the same instant (the feasible set $[0, 200]$~kPa is convex, as the corollary requires). Consequently, a scenario that cannot be filled by any constant auxiliary pressure cannot be filled by any time-varying policy either, and the constant-pressure scan is a complete controllability test. If the minimum achievable dry area across all these trials is above the 2\% threshold, then we can conclude that the system is not controllable for that scenario. In this case, we find that only 51/100 scenarios were controllable, which is therefore a genuine upper bound on the success rate of any control strategy within the feasible action space. In this light, our increase from 18/100 to 42/100 is more notable, as the control strategy succeeds in 42 of the 51 controllable scenarios, corresponding to an 82\% success rate within the subset of scenarios where control is possible.


\section{Conclusion}
\label{sec:Con}

This paper presents a framework for online control of the RTM process, resolving the computational cost associated with time-varying control histories. By utilising the fact that the flow front location is uniquely determined by only the average injection pressure, we can train neural network surrogate models that emulate the computationally expensive and non-differentiable LIMS FECV model. We combine online Bayesian inference with the fast, differentiable surrogate models, correcting the surrogate--solver discrepancy through the BAE framework, to obtain a real-time-capable estimation and receding-horizon control strategy that reduces the terminal dry area in the numerical fork-geometry study. Under the 2\% dry-area criterion, the success rate rises from 18/100 without control to 42/100 with the controlled strategy. Although this is a substantial improvement, the success rate is still low.

There are two key constraining factors for the success of this algorithm that could be investigated in future work. The first and most important is the controllability of the system. If the control actions are not sufficiently powerful or flexible to rebalance the flow fronts, then even a perfect estimate of the race-tracking parameters will not guarantee a successful fill. In the present study the single auxiliary gate cannot always be adjusted to achieve perfect filling within the feasible action space, and we have shown that in our numerical example, only 51 out of 100 race-tracking scenarios were controllable with the current gate and vent design. This result shows that gate and vent locations (as well as numbers) should be carefully designed to ensure that the system is as controllable as possible, so that as the race-tracking estimates are found, the control actions can be effective. Similarly, optimal sensor placement should be considered to ensure maximum observability and minimum uncertainty in the race-tracking parameter estimates \cite{Wright2023}. This is a promising direction for future research, and there are already a large number of papers focussing on the idea of optimally designing / positioning gates, vents and sensors within the mould~\cite{Young1994, Mathur1999, Ali2002, GokceAdvani2004, Ye2004, Chai2021, NiknafsKermani2026}.

The second constraint is the accuracy of the surrogate models. As the focus of the present work is on demonstrating the overall framework, we have not fully optimised the surrogate architectures or training procedures. Improvements to both the pressure surrogate $g$ and the control surrogate $h$ would improve the performance of the algorithm. A more accurate $g$ reduces the approximation error $\varepsilon(x) = f(x) - g(x)$, yielding a smaller BAE covariance and reduced posterior uncertainty throughout filling, while a more accurate $h$ yields more accurate optimal control actions. Future work could investigate other surrogate architectures, including the use of Gaussian processes or physics-informed neural networks (PINNs)~\cite{Raissi2019, Yu2022, Chiu2022}.

There are two small caveats with our formulation here to note. As the number of controllable gates increases (to improve system controllability), the simple one-dimensional golden-section search used here will not scale, and gradient-based optimisation over the differentiable surrogate $h$ will be required to find the optimal control actions efficiently. Secondly, we note that formulating the control objective as a pure minimisation without penalizing control effort ($\Delta a$) could lead to aggressive pressure changes in an unrestrained physical setup, although this was not observed here. The methodology has so far been demonstrated on a single simple geometry and evaluated numerically only. Future work should consider more complex geometries and, ultimately, physical experiments. The framework is general and applies to any geometry, but its performance will depend on the controllability of the system and the accuracy of the surrogates.

A known concern with sequential Bayesian estimators such as the IEKF is sensitivity to prior misspecification. Previous investigations of this estimation framework have demonstrated that the BAE-corrected framework yields parameter estimates that remain robust even when the initial prior distributions are intentionally misaligned with the true race-tracking parameters \cite{Wright2026}. Consequently, we anticipate that the estimation phase of this algorithm will translate reliably to similarly bounded misalignments, though its reliability under more extreme or novel race-tracking configurations remains an open validation question.

Finally, transitioning from this simulation-based framework to a physical RTM process~\cite{Rudd2001} will introduce the ``sim-to-real'' gap. LIMS models isothermal Darcy flow, whereas real thermosetting resins undergo complex curing kinetics during filling, leading to dynamic changes in viscosity and temperature~\cite{Lee2000}. Validating the control policy against such transient, un-modelled physical effects will be a critical step toward deployment in production~\cite{Lee2024}.

\section*{Declarations}

\paragraph{Funding} The authors have no funding to declare.

\paragraph{Data availability} The data supporting the findings of this study are available from the corresponding author on request.

\bibliographystyle{plain}
\bibliography{bibliography}

\appendix

\section{Proof of the average-pressure theorem}
\label{app:proof}

Here, we provide a proof of the average-injection-pressure parameterisation detailed in Section~\ref{sec:Surrogate}, formalised as Theorem 1 below. The argument follows standard techniques used in free-boundary theory. We refer the reader to~\cite{Baiocchi1972, Duvaut1973, Friedman1982, KinderlehrerStampacchia1980}, among others, for more details.

\begin{theorem}[Average-pressure path-independence]
\label{thm:avgpressure}
Let $D\subset\mathbb{R}^d$ with $d=2$ or $d=3$, be a bounded Lipschitz domain, let $K:D\to\mathbb{R}^{d\times d}$ be measurable, essentially bounded, and uniformly positive definite and assume $\mu,\phi>0$ are constant. Furthermore, let $a\in L^1(0,T_a;\mathbb{R}_+^{n_g})$ and $b\in L^1(0,T_b;\mathbb{R}_+^{n_g})$ denote two non-negative gate pressure histories which induce (through the quasi-static Darcy filling problem (\ref{eq:Darcy})-(\ref{eq:FrontVelocity})) the filled regions $\Omega_a(t)$ and $\Omega_{b}(t)$, respectively, with $T_a$ and $T_b$ denoting the filling times when using $a(t)$ and $b(t)$, respectively. Then for a fixed time $0< t\leq\min\{T_a,T_b\}$, if 
\[\int_0^t a(\tau)\;d\tau=\int_0^t b(\tau)\;d\tau,\]
then the corresponding filled-region indicator functions satisfy
\[\boldsymbol{1}_{\Omega_a(t)}=\boldsymbol{1}_{\Omega_b(t)},\quad a.e. \text{ in } D.\]
That is to say, at time $t$ the filled region depends only on the cumulative gate pressures
\[\mathbb{R}^{n_g}\ni\sigma(t):=\int_0^ta(\tau)\;d\tau=t\bar{a}.\]
\end{theorem}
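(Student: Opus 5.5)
The natural tool is the Baiocchi transformation, which turns the moving-boundary Hele-Shaw/Darcy problem into a family of elliptic variational inequalities parametrised by time. The key observation is that the time-integrated pressure depends linearly on the boundary data through an obstacle problem whose data involve only the cumulative gate pressure $\sigma(t)$.

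\textbf{Step 1 (Baiocchi variable).} For a history $a$, I will extend the pressure by zero outside $\Omega_a(\tau)$ and set
\[
w_a(\spacev,t):=\int_0^t p_a(\spacev,\tau)\,d\tau .
\]
Non-negativity of $a$ and the maximum principle give $p_a\ge 0$. The flow is monotone ($v_n\ge0$, so $\Omega_a(\tau)$ is nondecreasing in $\tau$). Hence $w_a\ge0$, and $\{w_a(\cdot,t)>0\}$ coincides, up to a null set, with $\Omega_a(t)$. On the gates, $w_a=\sigma_j(t)$ on $\Gamma_{\rm in}^j$, while $w_a=0$ on $\Gamma_{\rm vent}$; on the walls $K\nabla w_a\cdot n=0$.

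\textbf{Step 2 (derive the elliptic problem).} Next I will compute $-\nabla\cdot(\tfrac{K}{\mu}\nabla w_a)$ in the weak sense. I will test against $\varphi\in C_c^\infty$, swap the time and space integrals, and use the weak form of (\ref{eq:Pressure}) on $\Omega_a(\tau)$ together with the Stefan condition (\ref{eq:FrontVelocity}). The expected result is
\[
-\nabla\cdot\Bigl(\tfrac{K}{\mu}\nabla w_a\Bigr)=\phi\,\bigl(\boldsymbol{1}_{\Omega_a(t)}-\boldsymbol{1}_{\Omega_a(0)}\bigr)\quad\text{in }D .
\]
Formally this holds because $\frac{d}{d\tau}\int\phi\boldsymbol 1_{\Omega(\tau)}\varphi=-\int_{\Gamma_{\rm front}}\frac{K}{\mu}\partial_n p\,\varphi$, so the boundary flux terms integrate in time to the change of the filled volume. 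Because $\Omega_a(0)$ is the initial region (essentially empty, or fixed near the gates), combining this with $w_a\ge0$ and $\boldsymbol 1_{\Omega_a(t)}=\boldsymbol 1_{\{w_a>0\}}$ shows that $w_a(\cdot,t)$ solves an obstacle problem. Concretely, $w_a(\cdot,t)\in\mathcal K_{\sigma(t)}:=\{v\in H^1(D): v\ge0,\ v=\sigma_j(t)\text{ on }\Gamma_{\rm in}^j,\ v=0\text{ on }\Gamma_{\rm vent}\}$, and for all $v\in\mathcal K_{\sigma(t)}$,
\[
\int_D \tfrac{K}{\mu}\nabla w\cdot\nabla(v-w)\,ds\ \ge\ -\int_D\phi\,(1-\boldsymbol 1_{\Omega(0)})(v-w)\,ds .
\]
This is a complementarity formulation: the equation holds where $w>0$, and $w=0$ elsewhere.

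\textbf{Step 3 (uniqueness and conclusion).} The bilinear form is coercive on the affine subspace, using uniform positive definiteness of $K$ plus the Dirichlet part on the vent and gates. The convex set is closed and nonempty. By Lions--Stampacchia the variational inequality therefore has a unique solution, which depends on $a$ only through $\sigma(t)$. If $\int_0^t a=\int_0^t b$, then $w_a(\cdot,t)=w_b(\cdot,t)$. Hence their positivity sets agree a.e., giving $\boldsymbol 1_{\Omega_a(t)}=\boldsymbol 1_{\Omega_b(t)}$ a.e.

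\textbf{Main obstacle.} The hard part is Step 2 done rigorously at the stated regularity. With $a$ only $L^1$ in time and $K$ only $L^\infty$, the front is not smooth and (\ref{eq:FrontVelocity}) has no pointwise meaning. My plan is to take the weak formulation of the filling problem to be the distributional equation $\phi\,\partial_t\boldsymbol 1_{\Omega}-\nabla\cdot(\tfrac{K}{\mu}\nabla p)=0$, with $p\ge0$, $p(1-\boldsymbol 1_\Omega)=0$ and $\Omega$ nondecreasing; this is the standard weak Hele-Shaw sense. Integrating this identity in time against a time-independent test function gives Step 2 directly, which avoids any regularity of the front.

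I also need to justify the identification $\{w>0\}=\Omega(t)$ a.e. There is a subtlety: $p$ may vanish on parts of $\Omega(\tau)$, for instance when a gate has pressure zero. I will handle this by noting that the Step 2 equation forces $\nabla\cdot(K\nabla w)=-\phi<0$ on $\Omega(t)\setminus\Omega(0)$, so $w$ cannot vanish on a positive-measure subset of that region. Finally, the vent condition before the filling time $t\le\min\{T_a,T_b\}$ ensures both histories are posed on the same boundary data, so the uniqueness argument in Step 3 applies to both.
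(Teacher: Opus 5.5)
Your route is the paper's: Baiocchi variable, an obstacle-type variational inequality with source $\phi(1-\boldsymbol{1}_{\Omega(0)})$, then Lions--Stampacchia uniqueness. The variational inequality you finally write is the correct one. However, Step 2 has a sign error, and it breaks two of your steps.

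\textbf{The sign in Step 2.} Your own formal computation gives $\int_D \tfrac{K}{\mu}\nabla p\cdot\nabla\varphi\,ds=-\tfrac{d}{d\tau}\int_D\phi\boldsymbol{1}_{\Omega(\tau)}\varphi\,ds$. Integrating in time, this yields $\nabla\cdot(\tfrac{K}{\mu}\nabla w)=+\phi(\boldsymbol{1}_{\Omega(t)}-\boldsymbol{1}_{\Omega(0)})$, not $-\nabla\cdot(\cdots)=\phi(\cdots)$. For a check, take 1D with one gate and $\Omega(0)=\emptyset$: then $w=\tfrac12(s(t)-x)^2$ on the wet region, which is convex.

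\textbf{First consequence: the variational inequality does not follow as written.} Taken literally, your displayed equation makes the two source terms combine to $\phi(1+\boldsymbol{1}_{\Omega(t)}-2\boldsymbol{1}_{\Omega(0)})(v-w)$, which has no sign. With the correct sign, testing with $v-w$ gives
$\int_D \tfrac{K}{\mu}\nabla w\cdot\nabla(v-w)\,ds+\phi\int_D(1-\boldsymbol{1}_{\Omega(0)})(v-w)\,ds=\phi\int_D(1-\boldsymbol{1}_{\Omega(t)})v\,ds\ge0$.
This uses only $v\ge0$ and the easy inclusion $\{w>0\}\subseteq\Omega(t)$. For this you need the Step 2 identity for all $H^1$ test functions vanishing on the gates and vent, with the no-flux wall condition built in. Test functions in $C_c^\infty(D)$ alone are not enough.

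\textbf{Second consequence: the identification of $\Omega(t)$ with a positivity set is unsupported.} You identify $\Omega(t)$ with $\{w>0\}$. As stated this is false when $\Omega(0)$ has positive measure and $w$ vanishes on part of it, for instance near a gate held at zero. At best one has $\Omega(t)=\Omega(0)\cup\{w>0\}$. You justify $\Omega(t)\setminus\Omega(0)\subseteq\{w>0\}$ a.e.\ by $\nabla\cdot(K\nabla w)=-\phi<0$ there. With the correct sign, $\nabla\cdot(K\nabla w)=+\phi>0$ on $\Omega(t)\setminus\Omega(0)$. The weak minimum-principle argument for a nonnegative $H^1$ function only gives $\nabla\cdot(K\nabla w)\ge0$ a.e.\ on $\{w=0\}$, which is fully compatible with $+\phi$, so no contradiction arises.

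To exclude a positive-measure zero set inside $\Omega(t)\setminus\Omega(0)$, you would need one of the following:
\begin{itemize}
\item $\nabla\cdot(K\nabla w)=0$ a.e.\ on $\{w=0\}$. This follows from Stampacchia's theorem if $w\in W^{2,1}_{\rm loc}$ and $K$ is Lipschitz, but it is not available from $H^1$ regularity with merely bounded measurable $K$.
\item A strong-maximum-principle argument on $p$, of the kind the paper uses in its Comparison Lemma.
\end{itemize}

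\textbf{The fix, which is how the paper concludes.} You do not need the positivity-set identification at all. Once $w_a(\cdot,t)=w_b(\cdot,t)$, subtract the two corrected Step 2 identities. This gives $\phi\int_D(\boldsymbol{1}_{\Omega_a(t)}-\boldsymbol{1}_{\Omega_b(t)})\varphi\,ds=0$ for every admissible test function $\varphi$, hence $\boldsymbol{1}_{\Omega_a(t)}=\boldsymbol{1}_{\Omega_b(t)}$ a.e. The only fact about the zero set used anywhere is the one-sided inclusion $\{w>0\}\subseteq\Omega(t)$.
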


\begin{proof}
    Assume the assumptions in the theorem hold and define $\Gamma_{\rm s}:=  \Gamma_{\text{vent}} \cup (\cup_{j=1}^{n_g} \Gamma_{\text{in}}^{j})$ and $V_0(D):=\{v\in H^1(D)\;: \left.v\right|_{\Gamma_{\rm s}}\!=0\}$, and for notational simplicity set $\mu=1$.  Then for a given time $t$ before the filling time $T$, i.e., $0<t<T$, denote the filled region at time $t$ by $\Omega(t)$. Let $p$ be the solution to the quasi-static Darcy filling problem (\ref{eq:Darcy})-(\ref{eq:FrontVelocity}). Taking $w\in V_0(D)$ as a test function, we have
    \[0=\int_{\Omega(t)}(K\nabla p) \cdot\nabla w \;ds-\int_{\Gamma_{\text{front}}(t)}(K\nabla p\cdot n) w \;dS=\int_{\Omega(t)}(K\nabla p) \cdot\nabla w \;ds+\phi\int_{\Gamma_{\text{front}}(t)} v_n w \;dS,\]
    where the first equality follows from using integration by parts, and from the fact that all other boundary terms vanish (since $w=0$ on $\Gamma_{\rm s}$ and $q\cdot n=(-K\nabla p)\cdot n=0$ on $\partial D\setminus\Gamma_{\rm s}$), and the last equality follows by plugging in the front velocity (\ref{eq:FrontVelocity}).

    Now, by Reynolds transport theorem (see for example~\cite[Result 4.10]{GonzalezStuart2008}), since the test function $w$ is independent of time, we have
    \begin{align}
        -\phi\frac{d}{dt}\int_{\Omega(t)} w\;ds=-\phi\int_{\Gamma_{\text{front}}(t)} v_n w \;dS=\int_{\Omega(t)}(K\nabla p) \cdot\nabla w \;ds.\label{eq:Reynold}
    \end{align}

    Now, consider extending $p$ by $0$ into $D\setminus\Omega(t)$ and let $\boldsymbol{1}_{\Omega(t)}$ be the filled-region indicator function for $\Omega(t)$. Then we can rewrite (\ref{eq:Reynold}) over the domain $D$ as
    \begin{align}
        \phi\frac{d}{dt}\int_D \boldsymbol{1}_{\Omega(t)}w\;ds+\int_{D}(K\nabla p) \cdot\nabla w \;ds=0.\label{eq:ddtFront}
    \end{align}
     
At this point we employ the Baiocchi transform~\cite{cummings1999two,baiocchi1984variational} to convert the moving-boundary problem into a fixed-domain obstacle problem. Specifically, by introducing the time-integrated pressure
\[u(\spacev,t) := \int_0^t p(\spacev,\tau) \, d\tau,\]
and integrating (\ref{eq:ddtFront}) and the gate-pressures $a(t)$ with respect to $t$ from $0$ to $t$, the time integrated pressure satisfies
\begin{align}
    \phi\int_D (\boldsymbol{1}_{\Omega(t)}-\boldsymbol{1}_{\Omega(0)})w\;ds+\int_{D}(K\nabla u(t)) \cdot\nabla w \;ds&=0,\quad w\in V_0,\label{eq:weakBeq}\\
    u(t)&=\sigma_j(t),\quad \text{on } \Gamma_{\text{in}}^{j},\quad j=1,2,\dots,n_g,\label{eq:weakBeqBC}\\
    u(t)&=0,\quad \text{on } \Gamma_{\rm vent},
\end{align}
with $\int_0^t a_j(\tau)\;d\tau=\sigma_j(t)$, and where the regularity of $K$, $p$ and $w$ allows us to use Fubini's theorem to swap the time and spatial integrals. Notice that the time integrated pressure $u(s,t)$ is always non-negative, since $p$ is non-negative, i.e., $u(s,t)\geq 0$, and that if at time $t$ the point $s$ is dry, it has been dry for all $0\leq\tau\leq t$, meaning $p(s,\tau)=0$ which implies then  $u(s,t)=0$.  Moreover, if the point $s$ is dry at time $t$ then $\boldsymbol{1}_{\Omega(t)}=0$ at $s$ and we have the relationship
\begin{align}
    u(1-\boldsymbol{1}_{\Omega(t)})=0.\label{eq:keyEq}
\end{align}

At this point, following the standard variational formulation of elliptic obstacle problems; see, e.g., \cite{KinderlehrerStampacchia1980}, we let $\zeta\in H^1(D)$ be a non-negative function, i.e., $\zeta\geq0$ a.e. in $D$ which satisfies the same Dirichlet boundary conditions as $u(t)$. Then, picking $w=\zeta-u(t)\in V_0$ in Equation (\ref{eq:weakBeq}) and rearranging, we have
\[\int_{D}(K\nabla u(t)) \cdot\nabla (\zeta-u(t)) \;ds=\phi\int_D (\boldsymbol{1}_{\Omega(0)}-\boldsymbol{1}_{\Omega(t)})(\zeta-u(t))\;ds.\]
Adding $\phi\int_D (1-\boldsymbol{1}_{\Omega(0)})(\zeta-u(t))\;ds$ to each side gives
    \[\int_{D}(K\nabla u(t)) \cdot\nabla (\zeta-u(t)) \;ds+\phi\int_D (1-\boldsymbol{1}_{\Omega(0)})(\zeta-u(t))\;ds=\phi\int_D (1-\boldsymbol{1}_{\Omega(t)})(\zeta-u(t))\;ds.
    \]
    Using (\ref{eq:keyEq}) and the fact that $\phi\int_D (1-\boldsymbol{1}_{\Omega(t)})\zeta\;ds\geq 0$, we have
   \begin{align}\int_{D}(K\nabla u(t)) \cdot\nabla (\zeta-u(t)) \;ds+\phi\int_D (1-\boldsymbol{1}_{\Omega(0)})(\zeta-u(t))\;ds\geq 0\label{eq:Varenq}.
\end{align}
The key insight is that at the fixed time $t$, the filled region $\Omega(t)$ and the moving front $\Gamma_{\rm front}(t)$ do not explicitly appear in \eqref{eq:Varenq}. That is to say, the gate pressure history enters only through the cumulative gate pressures $\sigma(t)$. Hence, for two non-negative gate pressure histories satisfying 
\[\int_0^t a(\tau)\;d\tau=\int_0^t b(\tau)\;d\tau,\]
the resulting time-integrated pressures $u_a(t)$ and $u_b(t)$ satisfy the same fixed domain variational inequality with the same Dirichlet boundary conditions. Then, by uniqueness of the solution to the elliptic variational inequality (\ref{eq:Varenq})~\cite{KinderlehrerStampacchia1980}, we have
\[u_a=u_b.\]

Finally, applying \eqref{eq:weakBeq} to the two pressure histories and
subtracting the resulting equations gives
\[
    \phi\int_D
    \left(
    \boldsymbol{1}_{\Omega_a(t)}
    -
    \boldsymbol{1}_{\Omega_b(t)}
    \right)w\,ds
    =0,
    \qquad w\in V_0(D),
\]
since $u_a(t)=u_b(t)$. Then, since the identity holds for all 
$w\in V_0(D)$, we have
\[
    \boldsymbol{1}_{\Omega_a(t)}
    =
    \boldsymbol{1}_{\Omega_b(t)}
    \qquad\text{a.e. in }D.
\]
That is to say, the filled region at time $t$ depends on the gate-pressure history
only through the cumulative gate pressures $\sigma(t)$, completing the
proof.
\end{proof}

\begin{remark}
The theorem is posed in terms of the filled-region
indicator functions and does not require the free boundary to be connected or smoothly
parameterised. Consequently, merging of
disjoint flow fronts, does not affect the result.
\end{remark}

The restriction to constant continuation pressures in Section~\ref{sec:Surrogate} and the controllability test of Section~\ref{sec:ForkResults} require a statement about the resin domain at the moment filling terminates, rather than at a fixed time. It follows from Theorem~\ref{thm:avgpressure} together with a comparison principle for the variational inequality \eqref{eq:Varenq}. Note that the proof of Theorem~\ref{thm:avgpressure} uses the initial resin domain only through $\boldsymbol{1}_{\Omega(0)}$, so the theorem applies verbatim from any initial time $t_0 \ge 0$ and initial domain $\Omega(t_0)$, with $\sigma(t) = \int_{t_0}^{t} a(\tau)\,d\tau$. Throughout, $\Omega(\sigma)$ denotes the resin domain produced by cumulative gate pressures $\sigma \in \mathbb{R}^{n_g}_{+}$ from a fixed initial domain, which by Theorem~\ref{thm:avgpressure} is well defined up to a set of measure zero.

\begin{lemma}[Comparison]
\label{lem:comparison}
Under the assumptions of Theorem~\ref{thm:avgpressure}, if $\sigma_1 \le \sigma_2$ componentwise, then $\Omega(\sigma_1) \subseteq \Omega(\sigma_2)$.
\end{lemma}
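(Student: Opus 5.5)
We plan to work entirely with the fixed-domain obstacle formulation obtained in the proof of Theorem~\ref{thm:avgpressure}. For cumulative pressures $\sigma$, let $u_\sigma\ge 0$ denote the time-integrated pressure. By that proof, $u_\sigma$ is the unique solution of the variational inequality \eqref{eq:Varenq} over the convex set
\[
\mathcal{K}_\sigma:=\{\zeta\in H^1(D):\zeta\ge 0,\ \zeta=\sigma_j \text{ on }\Gamma_{\rm in}^j,\ \zeta=0 \text{ on }\Gamma_{\rm vent}\}.
\]
The source term is $f:=-\phi(1-\boldsymbol{1}_{\Omega(0)})\le 0$, and it is independent of $\sigma$. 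The filled region is recovered as $\Omega(\sigma)=\{u_\sigma>0\}$, up to null sets and the initial domain.

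More precisely, \eqref{eq:weakBeq} together with \eqref{eq:keyEq} give the complementarity structure: $-\nabla\cdot(K\nabla u_\sigma)=\phi(\boldsymbol{1}_{\Omega(\sigma)}-\boldsymbol{1}_{\Omega(0)})$. Where $u_\sigma=0$, we have $\boldsymbol{1}_{\Omega(\sigma)}=0$. Where $u_\sigma>0$, the point is wet, since $p>0$ at some earlier time. So, modulo $\Omega(0)$, which lies in both regions trivially, $\Omega(\sigma)$ coincides a.e. with the positivity set of $u_\sigma$. The lemma therefore reduces to the monotonicity claim $u_{\sigma_1}\le u_{\sigma_2}$ a.e. whenever $\sigma_1\le\sigma_2$.

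To prove this comparison, we will use the standard lattice argument for obstacle problems (Kinderlehrer--Stampacchia). Write $u_i=u_{\sigma_i}$ and test the inequality for $u_1$ with $\zeta=\min(u_1,u_2)$. This is admissible because it is nonnegative, and on $\Gamma_{\rm in}^j$ it equals $\min(\sigma_{1,j},\sigma_{2,j})=\sigma_{1,j}$, while it vanishes on the vent. Next, test the inequality for $u_2$ with $\zeta=\max(u_1,u_2)$. This is nonnegative, equals $\sigma_{2,j}$ on the gates, and vanishes on the vent. Set $v=(u_1-u_2)^+$, so that $\min(u_1,u_2)-u_1=-v$ and $\max(u_1,u_2)-u_2=v$. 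Adding the two inequalities, the linear source terms $\pm\int f v$ cancel because $f$ does not depend on $\sigma$. What remains is
\[
-\int_D K\nabla u_1\cdot\nabla v+\int_D K\nabla u_2\cdot\nabla v\ge 0,
\quad\text{i.e.}\quad \int_D K\nabla v\cdot\nabla v\le 0.
\]
Uniform positive definiteness of $K$ then gives $\nabla v=0$. Since $v$ vanishes on $\Gamma_{\rm vent}$, which has positive surface measure, Poincaré's inequality yields $v=0$, so $u_1\le u_2$ a.e. It follows that $\{u_1>0\}\subseteq\{u_2>0\}$, hence $\Omega(\sigma_1)\subseteq\Omega(\sigma_2)$ a.e.

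The main obstacle is justifying rigorously that $\Omega(\sigma)=\{u_\sigma>0\}$ a.e. This needs $u_\sigma>0$ a.e. on $\Omega(\sigma)\setminus\Omega(0)$, which in turn requires that wetted points have strictly positive pressure at some later time. I would argue this via the strong maximum principle for $p(\cdot,\tau)$ on each connected component of the wet region touching a gate with positive pressure. If that fails, for instance when all gates are at zero, I would instead fall back on reading the indicator off \eqref{eq:weakBeq}: $\boldsymbol{1}_{\Omega(\sigma)}=\boldsymbol{1}_{\Omega(0)}-\phi^{-1}\nabla\cdot(K\nabla u_\sigma)$. I would then use that, for obstacle problems, $\nabla\cdot(K\nabla u)=0$ a.e. on $\{u=0\}$ by Stampacchia's lemma. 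This identifies the wet set with the complement of the coincidence set away from $\Omega(0)$, and then monotonicity of $u_\sigma$ transfers directly to the wet sets.
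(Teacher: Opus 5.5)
Your proposal is correct and follows the paper's proof. Both arguments identify $\Omega(\sigma)$ with $\Omega(0)\cup\{u_\sigma>0\}$ using \eqref{eq:keyEq} plus the strong maximum principle, and then invoke monotone dependence of the obstacle-problem solution on its gate data; the paper only cites this monotonicity from Kinderlehrer--Stampacchia, whereas you prove it directly with the $\min/\max$ lattice test functions. One small slip: \eqref{eq:weakBeq} gives $-\nabla\cdot(K\nabla u_\sigma)=\phi(\boldsymbol{1}_{\Omega(0)}-\boldsymbol{1}_{\Omega(\sigma)})$, consistent with your $f\le 0$, so the signs in your complementarity identity and fallback formula are flipped, but neither the main argument nor the fallback's logic depends on that sign.
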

\begin{proof}
Two facts about the time-integrated pressure are needed. First, \eqref{eq:keyEq} gives $\{s \in D : u(s,t) > 0\} \subseteq \Omega(t)$. Conversely, if $s \in \Omega(t)\setminus\Omega(0)$, the flow front passed $s$ at some time $\tau_0 < t$; the front advances only where the pressure is not identically zero on the connected component of the resin domain containing it, and for times shortly after $\tau_0$ the point $s$ lies in the interior of that component, where the strong maximum principle gives $p(s,\tau) > 0$, so $u(s,t) > 0$. Hence
\begin{equation}
    \Omega(t) = \Omega(0) \cup \{ s \in D : u(s,t) > 0 \}
    \label{eq:domainfromu}
\end{equation}
up to a set of measure zero. Second, the variational inequality \eqref{eq:Varenq} has boundary data $\sigma_j$ on the gates and zero on the vent, and a source term $\phi(1-\boldsymbol{1}_{\Omega(0)})$ that does not depend on $\sigma$; the solution of an obstacle problem with a coercive bilinear form depends monotonically on its boundary data~\cite{KinderlehrerStampacchia1980}, so $\sigma_1 \le \sigma_2$ implies $u_1 \le u_2$ in $D$. Then $\{u_1 > 0\} \subseteq \{u_2 > 0\}$, and the claim follows from \eqref{eq:domainfromu}.
\end{proof}

\begin{corollary}[Sufficiency of constant policies]
\label{cor:constant}
Let $\mathcal{A}$ be a convex set of admissible gate pressures, let $\Omega(t_0)$ be the resin domain at some time $t_0 \ge 0$, and let $a(\cdot)$ be any admissible policy on $[t_0, T]$, $a(\tau) \in \mathcal{A}$, under which the resin first reaches the vent at time $T$. Then the constant policy equal to the time average $\bar{a} = \frac{1}{T - t_0}\int_{t_0}^{T} a(\tau)\,d\tau$ is admissible, the resin first reaches the vent under it at the same time $T$, and it produces the same terminal resin domain.
\end{corollary}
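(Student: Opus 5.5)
The plan is to apply Theorem~\ref{thm:avgpressure} from the initial time $t_0$, as the discussion preceding Lemma~\ref{lem:comparison} allows, and to reduce the whole statement to a comparison of cumulative pressures at each intermediate time. Admissibility is immediate. Since $\mathcal{A}$ is convex and $a(\tau)\in\mathcal{A}$ for almost every $\tau\in[t_0,T]$, the average $\bar a$ lies in the closure of the convex hull of $\mathcal{A}$, which is the closure of $\mathcal{A}$. If $\mathcal{A}$ is closed, as the box $[0,200]$~kPa is, then $\bar a\in\mathcal{A}$. I would add closedness as a standing assumption, or else note that the result only needs $\bar a\in\overline{\mathcal{A}}$.

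Write $\sigma_a(t)=\int_{t_0}^t a\,d\tau$ and $\sigma_c(t)=(t-t_0)\bar a$. At $t=T$ these coincide. Theorem~\ref{thm:avgpressure} then gives $\mathbf{1}_{\Omega_a(T)}=\mathbf{1}_{\Omega_c(T)}$ a.e., provided both filling processes are defined up to time $T$. This yields the equality of terminal domains, and hence of the terminal dry measure $\Psi$.

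The substantive step is the claim about the first arrival time, and I expect it to be the main obstacle. I would split it into two inequalities. First, the constant policy must not reach the vent before $T$. Second, it must reach the vent exactly at $T$.

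For the second inequality, the terminal domains agree and the vent is reached by $\Omega_a(T)$. I would interpret ``reaching the vent'' as the closure of $\Omega(t)$ meeting $\Gamma_{\rm vent}$ in positive measure, or equivalently $u>0$ near the vent via \eqref{eq:domainfromu}. With that interpretation, $\Omega_c(T)$ also reaches the vent.

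For the first inequality, I would argue by contradiction. Suppose the constant policy reaches the vent at some $t_1<T$. I would then try to find $s\in[t_0,T)$ with $\sigma_a(s)\ge\sigma_c(t_1)$ componentwise. If such an $s$ exists, Lemma~\ref{lem:comparison} gives $\Omega_c(t_1)=\Omega(\sigma_c(t_1))\subseteq\Omega(\sigma_a(s))=\Omega_a(s)$, so the policy $a$ would already touch the vent at time $s<T$. That contradicts $T$ being its first arrival time.

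This step is where the argument may fail. For $n_g=1$, which is the case used in the paper, $\sigma_a$ is continuous and nondecreasing with $\sigma_a(T)=\sigma_c(T)>\sigma_c(t_1)$, provided $\bar a>0$. By the intermediate value theorem there is then $s<T$ with $\sigma_a(s)=\sigma_c(t_1)$, and the argument goes through. When $\bar a=0$, neither front moves at all and the statement is trivial.

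For $n_g>1$ componentwise domination can fail. Some gates may lag early while others lead, so no single $s$ need satisfy $\sigma_a(s)\ge\sigma_c(t_1)$ in every component. The fallback I would try is the following. I would note that the paper's application has only one gate whose pressure varies, and restrict the corollary to that case or to policies with monotone cumulative ordering. I would then remark that the general multi-gate case requires that the set $\{\sigma:\Omega(\sigma)\text{ touches the vent}\}$ behave well along the straight ray $\sigma_c(\cdot)$ compared with the curve $\sigma_a(\cdot)$, and this is not guaranteed by monotonicity alone.

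Finally, I would check that the theorem is applied only before vent arrival, where the Darcy model \eqref{eq:BVP_front} is valid. The constant run is only considered on $[t_0,T]$, and on that interval it does not reach the vent early by the step above. So the hypothesis $t\le\min\{T_a,T_b\}$ of Theorem~\ref{thm:avgpressure} holds.
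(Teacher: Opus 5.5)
Your skeleton matches the paper's proof. Admissibility comes from convexity. Equality of the terminal domains comes from Theorem~\ref{thm:avgpressure} at $t=T$, because $\sigma_c(T)=\sigma_a(T)$. Earlier vent arrival is excluded by finding some $s<T$ with $\sigma_a(s)\ge\sigma_c(t_1)$ componentwise and applying Lemma~\ref{lem:comparison}.

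The gap is that you complete this last step only for a single gate, then assert that for $n_g>1$ componentwise domination can fail, and propose weakening the corollary. That obstruction does not exist, so as written you prove a strictly weaker result than the one stated. Your restricted version would not even cover the paper's fork example. There the fixed gate is also a Dirichlet gate, so $\sigma$ has a second component growing linearly in $t-t_0$. The time $s$ given by the intermediate value theorem for the controllable component can be earlier than $t_1$ (for a front-loaded policy), and then domination fails in the fixed-gate component.

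The missing observation is that $s$ may be taken anywhere in $[t_0,T)$, in particular close to $T$. You never need to compare the curve $\sigma_a(\cdot)$ and the ray $\sigma_c(\cdot)$ at comparable times.
\begin{itemize}
\item Write $\rho=\sigma_c(t_1)=\lambda\,\sigma_a(T)$ with $\lambda=(t_1-t_0)/(T-t_0)<1$.
\item For every gate with $\sigma_{a,j}(T)>0$, the inequality $\rho_j<\sigma_{a,j}(T)$ is strict.
\item For every gate with $\sigma_{a,j}(T)=0$, nonnegativity of $a_j$ gives $\sigma_{a,j}\equiv 0=\rho_j$.
\item Each $\sigma_{a,j}$ is continuous and nondecreasing. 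Hence $s_j:=\inf\{\tau:\sigma_{a,j}(\tau)\ge\rho_j\}$ satisfies $s_j<T$ and $\sigma_{a,j}(s_j)\ge\rho_j$.
\item At $s:=\max_j s_j<T$, monotonicity gives $\sigma_{a,j}(s)\ge\sigma_{a,j}(s_j)\ge\rho_j$ for all $j$ simultaneously.
\end{itemize}
This is exactly the paper's choice $t^\star=\max_j t_j$. Equivalently, $\sigma_a(s)\to\sigma_a(T)$ as $s\to T$, while $\rho$ lies strictly below $\sigma_a(T)$ in every nonzero component.

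In the terms of your fallback remark, Lemma~\ref{lem:comparison} makes $\{\sigma:\Omega(\sigma)\text{ reaches the vent}\}$ upward closed. That is all that is required, since every point of the ray before $\sigma_a(T)$ is dominated by a point of the curve before $T$. This also removes your case split on $\bar a>0$.

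Your extra closedness assumption on $\mathcal{A}$ is also unnecessary. In finite dimensions, the average of an integrable $\mathcal{A}$-valued function lies in the convex set $\mathcal{A}$ itself. If it did not, a nontrivial supporting hyperplane of $\overline{\mathcal{A}}$ at $\bar a$ would force $a(\tau)$ into that hyperplane a.e., and one inducts on the dimension of $\mathcal{A}$.
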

\begin{proof}
Admissibility follows from the convexity of $\mathcal{A}$, since $\bar{a}$ is an average of elements of $\mathcal{A}$. Let $\sigma(t) = \int_{t_0}^{t} a(\tau)\,d\tau$. Under the constant policy the cumulative pressures are $\sigma_c(t) = \frac{t - t_0}{T - t_0}\,\sigma(T)$, so $\sigma_c(T) = \sigma(T)$ and, by Theorem~\ref{thm:avgpressure}, the two policies produce the same domain at time $T$, which contains the vent. It remains to show that the vent is not reached earlier under the constant policy. Fix $t < T$ and set $\rho = \sigma_c(t)$, so that $\rho_j < \sigma_j(T)$ for every gate $j$ with $\sigma_j(T) > 0$, and $\rho_j = 0 = \sigma_j(\tau)$ for all $\tau$ otherwise. Because each component of $\sigma(\cdot)$ is continuous and non-decreasing, the time $t_j = \inf\{\tau : \sigma_j(\tau) \ge \rho_j\}$ satisfies $t_j < T$ for every $j$, so at $t^\star = \max_j t_j < T$ we have $\sigma(t^\star) \ge \rho$ componentwise. By Lemma~\ref{lem:comparison}, $\Omega(\rho) \subseteq \Omega(\sigma(t^\star))$, and the latter does not contain the vent because $t^\star < T$. Hence the constant policy does not reach the vent before $T$.
\end{proof}

\section{Training history for surrogates}
\label{app:hTraining}

Here, we show the training and validation loss over iterations for the two surrogate models $g$ and $h$, as described in Section~\ref{sec:NN}.

\begin{figure}[H]
\centering
\begin{minipage}[t]{0.48\textwidth}
\centering
\pgfplotstableread[col sep=comma]{Data/control_metrics.csv}\controlmetrics
\begin{tikzpicture}
\begin{axis}[
    width=\textwidth,
    height=0.55\textwidth,
    xlabel={Iteration},
    ylabel={Loss (MSE)},
    ymode=log,
    grid=major,
    unbounded coords=discard,
    filter discard warning=false,
    legend style={
        at={(0.5,1.02)},
        anchor=south,
        font=\scriptsize,
        draw=none,
        fill=none
    }
]
\addplot[teal, thick] table[x=iter,y expr={\thisrow{valLoss}>0 ? \thisrow{valLoss} : nan}]{\controlmetrics};
\addlegendentry{Validation Loss}
\addplot[blue, thick] table[x=iter,y expr={\thisrow{trainLoss}>0 ? \thisrow{trainLoss} : nan}]{\controlmetrics};
\addlegendentry{Training Loss}
\end{axis}
\end{tikzpicture}
\subcaption{Control surrogate $h$}
\label{fig:hTrainingLoss}
\end{minipage}
\hfill
\begin{minipage}[t]{0.48\textwidth}
\centering
\pgfplotsset{compat=1.18}
\pgfplotstableread[col sep=comma]{Data/training_metrics1.csv}\trainingone
\pgfplotstableread[col sep=comma]{Data/training_metrics2.csv}\trainingtwo
\pgfplotstablegetrowsof{\trainingone}
\pgfmathsetmacro{\trainingonesize}{\pgfplotsretval}
\pgfmathtruncatemacro{\switchepoch}{\trainingonesize}
\begin{tikzpicture}
\begin{axis}[
    width=\textwidth,
    height=0.55\textwidth,
    xlabel={Iteration},
    ylabel={Loss (MSE)},
    ymode=log,
    ymin=5e-4,
    ymax=3e-2,
    grid=major,
    xmin = 0,
    xmax = 625,
    legend style={
        at={(0.5,1.02)},
        anchor=south,
        font=\scriptsize,
        draw=none,
        fill=none
    }
]
\addplot[teal, thick] table[x expr = \thisrow{iter}/100,y=valLoss]{\trainingone};
\addlegendentry{Validation Loss}
\addplot[blue, thick] table[x expr = \thisrow{iter}/100,y=trainLoss]{\trainingone};
\addlegendentry{Training Loss}
\addplot [black, dashed, thick] coordinates {(\switchepoch,1e-4) (\switchepoch,1e-1)};
\addlegendentry{Data split}
\addplot[teal, thick] table[x expr=\switchepoch + \thisrow{iter}/100+1,y=valLoss]{\trainingtwo};
\addplot[blue, thick] table[x expr=\switchepoch + \thisrow{iter}/100+1,y=trainLoss]{\trainingtwo};
\end{axis}
\end{tikzpicture}
\subcaption{Surrogate $g$}
\label{fig:NNtrainingloss}
\end{minipage}
\caption{Training and validation loss for control surrogate $h$ (a) and surrogate $g$ (b) over training iterations. For $g$, training proceeded in two stages; the vertical dashed line indicates the point at which the ReLU activation layer was added to the output layer. The validation loss decreases significantly after the first stage, and continues to improve after the second stage, consistent with the two-stage training procedure.}
\label{fig:trainingLosses}
\end{figure}
\end{document}